\newcommand{\Dchaintwo}[4]{
\rule[-3\unitlength]{0pt}{8\unitlength}
\begin{picture}(14,5)(0,3)
\put(1,2){\ifthenelse{\equal{#1}{l}}{\circle*{2}}{\circle{2}}}
\put(2,2){\line(1,0){10}}
\put(13,2){\ifthenelse{\equal{#1}{r}}{\circle*{2}}{\circle{2}}}
\put(1,5){\makebox[0pt]{\scriptsize #2}}
\put(7,4){\makebox[0pt]{\scriptsize #3}}
\put(13,5){\makebox[0pt]{\scriptsize #4}}
\end{picture}}
\newcommand{\Hexagon}[7]{
\rule[-3\unitlength]{0pt}{8\unitlength}
\begin{picture}(30,20)(0,3)
\put(1,13){\ifthenelse{\equal{#1}{l}}{\circle*{2}}{\circle{2}}}
\put(2,14){\line(1,1){6}}
\put(9,21){\ifthenelse{\equal{#1}{l}}{\circle*{2}}{\circle{2}}}
\put(10,21){\line(1,0){8}}
\put(19,21){\ifthenelse{\equal{#1}{l}}{\circle*{2}}{\circle{2}}}
\put(20,20){\line(1,-1){6}}
\put(27,13){\ifthenelse{\equal{#1}{l}}{\circle*{2}}{\circle{2}}}
\put(26,12){\line(-1,-1){6}}
\put(19,5){\ifthenelse{\equal{#1}{l}}{\circle*{2}}{\circle{2}}}
\put(18,5){\line(-1,0){8}}
\put(9,5){\ifthenelse{\equal{#1}{l}}{\circle*{2}}{\circle{2}}}
\put(8,6){\line(-1,1){6}}
\put(-3,12){\makebox[0pt]{\scriptsize #2}}
\put(9,23){\makebox[0pt]{\scriptsize #3}}
\put(19,23){\makebox[0pt]{\scriptsize #4}}
\put(31,12){\makebox[0pt]{\scriptsize #5}}
\put(19,1){\makebox[0pt]{\scriptsize #6}}
\put(9,1){\makebox[0pt]{\scriptsize #7}}
\end{picture}}
\newcommand{\THexagon}[7]{
\rule[-3\unitlength]{0pt}{8\unitlength}
\begin{picture}(30,20)(0,3)
\put(1,13){\ifthenelse{\equal{#1}{l}}{\circle*{2}}{\circle{2}}}
\put(2,14){\line(1,1){6}}
\put(9,21){\ifthenelse{\equal{#1}{l}}{\circle*{2}}{\circle{2}}}
\put(10,21){\line(1,0){8}}
\put(19,21){\ifthenelse{\equal{#1}{l}}{\circle*{2}}{\circle{2}}}
\put(20,20){\line(1,-1){6}}
\put(27,13){\ifthenelse{\equal{#1}{l}}{\circle*{2}}{\circle{2}}}
\put(26,12){\line(-1,-1){6}}
\put(19,5){\ifthenelse{\equal{#1}{l}}{\circle*{2}}{\circle{2}}}
\put(18,5){\line(-1,0){8}}
\put(9,5){\ifthenelse{\equal{#1}{l}}{\circle*{2}}{\circle{2}}}
\put(8,6){\line(-1,1){6}}
\put(2,13){\line(1,0){24}}
\put(10,21){\line(1,-2){8}}
\put(18,21){\line(-1,-2){8}}
\put(-3,12){\makebox[0pt]{\scriptsize #2}}
\put(9,23){\makebox[0pt]{\scriptsize #3}}
\put(19,23){\makebox[0pt]{\scriptsize #4}}
\put(31,12){\makebox[0pt]{\scriptsize #5}}
\put(19,1){\makebox[0pt]{\scriptsize #6}}
\put(9,1){\makebox[0pt]{\scriptsize #7}}
\end{picture}}
\newcommand{\Dchainfour}[8]{
\rule[-3\unitlength]{0pt}{5\unitlength}
\begin{picture}(38,5)(0,3)
\put(1,2){\ifthenelse{\equal{#1}{1}}{\circle*{2}}{\circle{2}}}
\put(2,2){\line(1,0){10}}
\put(13,2){\ifthenelse{\equal{#1}{2}}{\circle*{2}}{\circle{2}}}
\put(14,2){\line(1,0){10}}
\put(25,2){\ifthenelse{\equal{#1}{3}}{\circle*{2}}{\circle{2}}}
\put(26,2){\line(1,0){10}}
\put(37,2){\ifthenelse{\equal{#1}{4}}{\circle*{2}}{\circle{2}}}
\put(1,5){\makebox[0pt]{\scriptsize #2}}
\put(7,4){\makebox[0pt]{\scriptsize #3}}
\put(13,5){\makebox[0pt]{\scriptsize #4}}
\put(19,4){\makebox[0pt]{\scriptsize #5}}
\put(25,5){\makebox[0pt]{\scriptsize #6}}
\put(31,4){\makebox[0pt]{\scriptsize #7}}
\put(37,5){\makebox[0pt]{\scriptsize #8}}
\end{picture}}
\def \1{\mathbf{1}}
\numberwithin{equation}{section}
\newtheorem{theorem}{Theorem}[section]
\newtheorem{lemma}[theorem]{Lemma}
\newtheorem{proposition}[theorem]{Proposition}
\newtheorem{definition}[theorem]{Definition}
\newtheorem{example}[theorem]{Example}
\begin{document}

\title{Quotient Category of a Multiring Category}
\thanks{This work was supported by National Natural Science Foundation of China(No.12271243).}

\author{Zhenbang Zuo~\orcidlink{0009-0008-9013-0365}, Gongxiang Liu}

\address{Department of Mathematics, Nanjing University, Nanjing 210093, China}
\email{mg20210045@smail.nju.edu.cn}

\address{Department of Mathematics, Nanjing University, Nanjing 210093, China}
\email{gxliu@nju.edu.cn}

\keywords{quotient category, Serre tensor-ideal, tensor category}
\date{}
\maketitle

\begin{abstract}
The aim of this paper is to introduce a tensor structure for the Serre quotient category of an abelian monoidal category with biexact tensor product to make the canonical functor a monoidal functor. In this tensor product, the Serre quotient category of a multiring category (resp. a multitensor category) by a two-sided Serre tensor-ideal is still a multiring category (resp. a multitensor category). Besides, a two-sided Serre tensor-ideal of a tensor category is always trivial. This result can be generalized to any tensor product. If the canonical functor is a monoidal functor, then the corresponding Serre subcategory of the tensor category is trivial. 
\end{abstract}

\section{Introduction}
This paper builds upon the groundwork of \cite{gabriel1962categories}, in which Gabriel introduced the quotient category $\mathcal{A}/\mathcal{C}$ of an abelian category $\mathcal{A}$ by a Serre subcategory $\mathcal{C}$. This quotient category is often called a Serre quotient category, and the process of obtaining a Serre quotient category is often called the localization of an abelian category. Gabriel proved that the quotient category of an abelian category is still an abelian category, and the canonical functor $T:\mathcal{A} \to \mathcal{A}/\mathcal{C}$ is an exact functor. 

Serre quotient categories attract many mathematicians. We collect several recent results about Serre quotient category as follows. An analog of the the fundamental homomorphism theorem is introduced by Mohamed Barakat and Markus Lange-Hegermann in \cite{MR3200455}. They also show that the coimage of a Gabriel monad is a Serre quotient category in \cite{MR3138372}, and research the Ext-computability of Serre quotient categories in \cite{MR3261464}. Ramin Ebrahimi shows that the natural map $q_{X,A}^i:\mathrm{Ext}_{\mathcal{A}}^i(X,A) \to \mathrm{Ext}_{\mathcal{A}/\mathcal{C}}^i(q(X),q(A))$ is invertible in \cite{MR4512513}, where $q$ is the canonical functor and $X,A$ are objects in $\mathcal{A}$. Moreover, Xiao-Wu Chen and Henning Krause define the expansion of abelian categories in \cite{MR2811570}.

We aim to introduce a tensor product for the Serre quotient category of a multitensor category. To make the results seem more interesting, we divide our results into two parts. The first part focuses on cases of abelian category, in which we show that the quotient category of a locally finite abelian category is a locally finite abelian category. In fact, the first part provides conditions for the second part.

The second part focuses on cases of monoidal category, in which we provide a definition for tensor product in the quotient category of an abelian monoidal category with biexact tensor product, and show that the quotient category of a multiring category (resp. a multitensor category) by a two-sided Serre tensor-ideal is a multiring category (resp. a multitensor category). Moreover, we show that a two-sided Serre tensor-ideal of a multiring category with left duals must be a direct sum of the component subcategories, and therefore the corresponding quotient category is isomorphic to a direct sum of the component subcategories of the original category.

In this paper, we present some basic definitions and results as preparation in section 2. We discuss the locally finiteness of the quotient category of a locally finite abelian category in section 3 as the first part of our main results. We devote section 4 to give a definition for tensor product in the quotient category of an abelian monoidal category with biexact tensor product, to study the quotient category of a multiring category (resp. a multitensor category), and to study the structure of a two-sided Serre tensor-ideal of a multiring category with left duals as the second part of our main results.

\section{Preliminaries}
Let $k$ be an algebraically closed field throughout this paper. Recall the definition of Serre subcategory.

\begin{definition}[Serre subcategory]
Let $\mathcal{A}$ be an abelian category. A full additive subcategory $\mathcal{C} \subset \mathcal{A}$ is a Serre
subcategory provided that $\mathcal{C}$ is closed under taking subobjects, quotients and extensions.
\end{definition}

Note that there is an equivalent definition for Serre subcategory.

\begin{lemma}
Let $\mathcal{A}$ be an abelian category. A full subcategory $\mathcal{C}$ of $\mathcal{A}$ is a Serre subcategory if and only if for any $X'\to X \to X''$ exact in $\mathcal{A}$ with $X',X''\in \mathcal{C}$, then also $X \in \mathcal{C}$.
\end{lemma}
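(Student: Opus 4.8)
The plan is to prove both directions by exploiting the elementary relationship between exactness of a three-term sequence and the notions of subobject, quotient, and extension.

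First I would prove the ``only if'' direction. Suppose $\mathcal{C}$ is a Serre subcategory in the sense of the definition above, and let $X' \to X \to X''$ be exact in $\mathcal{A}$ with $X', X'' \in \mathcal{C}$. The word ``exact'' at $X$ means $\operatorname{im}(X' \to X) = \ker(X \to X'')$. Set $K = \ker(X \to X'')$. Then $K$ is a quotient of $X'$ (namely $X' \twoheadrightarrow \operatorname{im}(X'\to X) = K$), hence $K \in \mathcal{C}$ since $\mathcal{C}$ is closed under quotients. Also $X/K \hookrightarrow X''$ is a subobject of $X''$ (it is the image of $X \to X''$), hence $X/K \in \mathcal{C}$ since $\mathcal{C}$ is closed under subobjects. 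Now $0 \to K \to X \to X/K \to 0$ is a short exact sequence with both ends in $\mathcal{C}$, so $X \in \mathcal{C}$ because $\mathcal{C}$ is closed under extensions. This gives the claimed closure property. (One should also note $\mathcal{C}$ is nonempty, since $0 \in \mathcal{C}$: closure under subobjects of any object forces $\mathcal{C}$ nonempty once we know it is nonempty as a subcategory — or simply observe that for $X \in \mathcal{C}$, the zero subobject lies in $\mathcal{C}$.)

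Conversely, assume $\mathcal{C}$ is a nonempty full subcategory satisfying the stated closure property; I must check it is additive and closed under subobjects, quotients, and extensions. For a subobject $X' \hookrightarrow X$ with $X \in \mathcal{C}$, apply the hypothesis to the exact sequence $X' \to X \xrightarrow{\operatorname{id}} X$ — more carefully, to $0 \to X' \to X$, viewing it as $X' \to X \to X$ where the second map is an iso; actually the cleanest choice is to use $X' \to X \to 0$ and note we need $0 \in \mathcal{C}$ first, so I would begin by taking any $Y \in \mathcal{C}$ and applying the hypothesis to $0 \to 0 \to 0$ regarded via $Y \to 0 \to Y$? That is circular; instead apply it to $Y \xrightarrow{0} 0 \xleftarrow{} \cdots$ — rather, apply the hypothesis to the exact sequence $Y \to Y \to 0$? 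This fails too. The honest route: apply the hypothesis to $Y \xrightarrow{\operatorname{id}} Y \xrightarrow{0} Y$; exactness at the middle holds since $\operatorname{im}(\operatorname{id}) = Y = \ker(0)$, and both outer terms are $Y \in \mathcal{C}$, so the middle $Y \in \mathcal{C}$ — unhelpful. The working argument is: $0 \to 0 \to Y$ is exact with outer terms $0$ and $Y$; but I do not yet know $0 \in \mathcal{C}$. So instead I establish $0 \in \mathcal{C}$ by applying the hypothesis to $Y \to 0 \to Y$, which is exact at $0$ (trivially), with outer terms in $\mathcal{C}$, yielding $0 \in \mathcal{C}$. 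With $0 \in \mathcal{C}$ in hand: for a subobject $X' \subseteq X \in \mathcal{C}$ use $X' \to X \to 0$ to get $X' \in \mathcal{C}$ — wait, that needs the outer terms in $\mathcal{C}$, i.e. $X'$, which is what we want; so instead use $0 \to X' \to X$, exact at $X'$, outer terms $0, X \in \mathcal{C}$, hence $X' \in \mathcal{C}$. For a quotient $X \twoheadrightarrow X''$ with $X \in \mathcal{C}$, use $X \to X'' \to 0$, exact at $X''$, outer terms in $\mathcal{C}$, so $X'' \in \mathcal{C}$. For an extension $0 \to X' \to X \to X'' \to 0$ with $X', X'' \in \mathcal{C}$, this is literally the hypothesis applied with the displayed three middle terms. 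Finally, closure under direct sums (additivity) follows since $0 \to X' \to X' \oplus X'' \to X'' \to 0$ is exact with ends in $\mathcal{C}$.

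The main obstacle — really the only subtlety — is the bootstrapping of $0 \in \mathcal{C}$ from nonemptiness, and being careful about exactly where the ``im $=$ ker'' condition is applied in a three-term (as opposed to short exact) sequence; once those are pinned down, everything is a direct diagram chase. I would present the argument compactly, stating the $0 \in \mathcal{C}$ step first and then handling subobjects, quotients, extensions, and direct sums in one short paragraph each.
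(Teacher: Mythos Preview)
The paper states this lemma without proof, so there is nothing to compare against; your argument stands on its own and is correct in all essentials. The ``only if'' direction is clean. In the ``if'' direction your final choices are the right ones: the sequence $Y \to 0 \to Y$ is indeed exact at the middle (image and kernel are both the zero object), giving $0 \in \mathcal{C}$; then $0 \to X' \hookrightarrow X$ handles subobjects, $X \twoheadrightarrow X'' \to 0$ handles quotients, and the short exact sequence handles extensions and direct sums.

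The only issue is presentational: the written proposal records several false starts before arriving at the correct sequences. In a final write-up you should delete those and simply state, in order: (i) pick $Y \in \mathcal{C}$ and apply the hypothesis to $Y \to 0 \to Y$ to get $0 \in \mathcal{C}$; (ii) for $X' \hookrightarrow X$ with $X \in \mathcal{C}$, apply it to $0 \to X' \to X$; (iii) for $X \twoheadrightarrow X''$ with $X \in \mathcal{C}$, apply it to $X \to X'' \to 0$; (iv) extensions and finite direct sums are immediate. That is the whole proof.
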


Recall the definition of quotient category of an abelian category. It is also called Serre quotient category. One can refer to \cite{gabriel1962categories} for more details about the localization of an abelian category.

\begin{definition}[Quotient Category]
Let $\mathcal{A}$ be an abelian category, $\mathcal{C}$ be a Serre subcategory of $\mathcal{A}$. The category $\mathcal{A}/\mathcal{C}$ is defined as follows:
\begin{enumerate}
    \item the objects of $\mathcal{A}/\mathcal{C}$ coincide with the objects of $\mathcal{A}$.
    \item the set of morphisms from $M$ to $N$ is defined by
\end{enumerate}
$$
\mathrm{Hom}_{\mathcal{A}/\mathcal{C}}(M,N) := \varinjlim_{M',N'} \mathrm{Hom}_{\mathcal{A}}(M',N/N')
$$
in which $M'$ and $N'$ go through the subobjects of $M$ and $N$ respectively such that $M/M', N' \in \mathcal{C}$.
\end{definition}

Denote the canonical functor by $T:\mathcal{A} \to \mathcal{A}/\mathcal{C}$. In fact, every morphism $\bar{f}$ in $\mathcal{A}/\mathcal{C}$ can be written as $Tf$ for some $f$ in the direct system. One can observe this fact by either understanding that the direct system in the definition of quotient category is directed or referring to \cite{gabriel1962categories}. 

The following lemma, which is Lemma 2 in \cite{gabriel1962categories}, describes how we can determine whether $Tu$ is a zero morphism, a monomorphism, or an epimorphism in the quotient category $\mathcal{A}/\mathcal{C}$ for a given morphism $u$ in $\mathcal{A}$.

\begin{lemma}
Let $\mathcal{A}$ be an abelian category, $\mathcal{C}$ be a Serre subcategory of $\mathcal{A}$. Let $u:M\to N$ be a morphism of $\mathcal{A}$, then
\begin{enumerate}
    \item $Tu$ is zero if and only if $\mathrm{Im}\ u \in \mathcal{C}$;
    \item $Tu$ is a monomorphism if and only if $\mathrm{Ker}\ u \in \mathcal{C}$;
    \item $Tu$ is an epimorphism if and only if $\mathrm{Coker}\ u \in \mathcal{C}$;
\end{enumerate}
\end{lemma}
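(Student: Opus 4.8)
The plan is to read off statement (1) directly from the directed-colimit description of the morphism sets in $\mathcal{A}/\mathcal{C}$, and then to obtain (2) and (3) formally from (1) together with the exactness of $T$ recalled in the introduction.

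First I would unwind what $Tu=0$ means. By the remark after the definition of $\mathcal{A}/\mathcal{C}$, the morphism $Tu$ is the image of $u\in\mathrm{Hom}_{\mathcal{A}}(M,N/0)=\mathrm{Hom}_{\mathcal{A}}(M,N)$ in the directed colimit $\varinjlim_{M',N'}\mathrm{Hom}_{\mathcal{A}}(M',N/N')$; since the index category is directed, an element of such a colimit vanishes precisely when it already vanishes at some stage. Concretely, $Tu=0$ if and only if there are subobjects $M'\subseteq M$ and $N'\subseteq N$ with $M/M',N'\in\mathcal{C}$ for which the composite $M'\hookrightarrow M\xrightarrow{u}N\twoheadrightarrow N/N'$ is zero, i.e. $u(M')\subseteq N'$ inside $N$.

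Granting this, part (1) is a short diagram chase. For the "if" direction, if $\mathrm{Im}\,u\in\mathcal{C}$ take $M'=M$ and $N'=\mathrm{Im}\,u$, so that $M/M'=0\in\mathcal{C}$, $N'\in\mathcal{C}$, and $u(M)\subseteq N'$, whence $Tu=0$. For the "only if" direction, given $M',N'$ as above, the epimorphism $M\twoheadrightarrow u(M)\twoheadrightarrow u(M)/u(M')$ vanishes on $M'$, hence factors through $M/M'$ and presents $u(M)/u(M')$ as a quotient of $M/M'\in\mathcal{C}$, so $u(M)/u(M')\in\mathcal{C}$; and $u(M')\subseteq N'\in\mathcal{C}$ forces $u(M')\in\mathcal{C}$ because $\mathcal{C}$ is closed under subobjects. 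Closure under extensions applied to $0\to u(M')\to u(M)\to u(M)/u(M')\to 0$ then gives $\mathrm{Im}\,u=u(M)\in\mathcal{C}$. As an immediate consequence, applying (1) to $\mathrm{id}_X$ shows that $TX=0$ if and only if $X\in\mathcal{C}$.

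For (2) and (3) I would invoke that $T$ is exact (Gabriel, \cite{gabriel1962categories}). Applying $T$ to the exact sequences $0\to\mathrm{Ker}\,u\to M\xrightarrow{u}N$ and $M\xrightarrow{u}N\to\mathrm{Coker}\,u\to 0$ identifies $\mathrm{Ker}(Tu)$ with $T(\mathrm{Ker}\,u)$ and $\mathrm{Coker}(Tu)$ with $T(\mathrm{Coker}\,u)$; hence $Tu$ is a monomorphism iff $T(\mathrm{Ker}\,u)=0$ iff $\mathrm{Ker}\,u\in\mathcal{C}$, and dually for epimorphisms. The only genuinely delicate point in this argument is the passage in the first two paragraphs: turning "equality in the directed colimit" into the concrete condition $u(M')\subseteq N'$, and then running the image/extension chase correctly in an arbitrary abelian category (in particular checking that $u(M)/u(M')$ is genuinely a quotient of $M/M'$). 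If one prefers not to cite exactness of $T$, parts (2) and (3) can instead be obtained by the same colimit bookkeeping used for (1), at the cost of a somewhat longer argument.
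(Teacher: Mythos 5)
The paper does not actually prove this statement: it is recalled verbatim as Lemma 2 of Gabriel's \emph{Des cat\'egories ab\'eliennes}, so there is no in-paper argument to compare against. Your proof is correct. For (1), the reduction of $Tu=0$ to the existence of a stage $(M',N')$ with $u(M')\subseteq N'$ is the right use of filteredness of the index system, and the subsequent chase --- $u(M')\in\mathcal{C}$ by closure under subobjects, $u(M)/u(M')$ a quotient of $M/M'$ hence in $\mathcal{C}$, then closure under extensions --- is sound. The one caveat concerns logical order in (2) and (3): in Gabriel's development this lemma is itself an ingredient in proving that $\mathcal{A}/\mathcal{C}$ is abelian and that $T$ is exact, so deriving (2) and (3) from the exactness of $T$ is, strictly speaking, circular as a from-scratch proof. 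Within this paper, where both the lemma and the exactness of $T$ are imported as known facts, the shortcut is harmless, and your closing remark already points to the non-circular alternative of running the colimit bookkeeping directly for kernels and cokernels.
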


Recall that an additive category $\mathcal{A}$ is said to be $k$-linear if for any objects $X,Y \in \mathcal{A}$, $\mathrm{Hom}_{\mathcal{A}}(X,Y)$ is equipped with a structure of a vector space over $k$, such that composition of morphisms is $k$-linear. The following definitions refer to \cite{MR3242743}.

\begin{definition}[Locally finite]
A $k$-linear abelian category $\mathcal{A}$ is said to be locally finite if the following two conditions are satisfied:
\begin{enumerate}
    \item for any two objects $X,Y$ in $\mathcal{A}$, the vector space $\mathrm{Hom}_{\mathcal{A}}(X,Y)$ is finite dimensional;
    \item every object in $\mathcal{A}$ has finite length.
\end{enumerate}
\end{definition}

Recall also the definitions for multiring category and multitensor category as follows.

\begin{definition}
\begin{enumerate}
\item A multiring category $\mathcal{A}$ over $k$ is a locally finite $k$-linear abelian monoidal category with bilinear and biexact tensor product. If in addition $\mathrm{End}_{\mathcal{A}}(1) = k$, we will call $\mathcal{A}$ a ring category.
 
\item A multitensor category $\mathcal{A}$ over $k$ is a locally finite $k$-linear abelian rigid monoidal category with bilinear tensor product. If in addition $\mathrm{End}_{\mathcal{A}}(1) = k$, and $\mathcal{A}$ is indecomposable i.e. $\mathcal{A}$ is not equivalent to a direct sum of nonzero multitensor categories, then we will call $\mathcal{A}$ a tensor category.
\end{enumerate}
\end{definition}

In fact, the tensor product of a multitensor category is always biexact, see Proposition 4.2.1 in \cite{MR3242743}. This implies a multitensor category is always a multiring category.

Recall also Theorem 4.3.8 in \cite{MR3242743} as the following:  

\begin{theorem}
\begin{enumerate}
\item In a ring category with left duals, the unit object $1$ is simple.
\item In a multiring category with left duals, the unit object $1$ is semisimple, and is a direct sum of pairwise non-isomorphic simple objects $1_i$.
\end{enumerate}
\end{theorem}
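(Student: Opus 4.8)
This is the statement of \cite[Theorem 4.3.8]{MR3242743}, and I would reconstruct its proof along the following lines. First observe that \emph{(ii) implies (i)}: a ring category is in particular a multiring category, so by (ii) we get $1\cong\bigoplus_{i\in I}1_i$ with the $1_i$ pairwise non-isomorphic simple objects. Since $k$ is algebraically closed and all Hom-spaces are finite-dimensional, Schur's lemma gives $\operatorname{End}_{\mathcal{A}}(1)\cong\prod_{i\in I}\operatorname{End}_{\mathcal{A}}(1_i)\cong k^{|I|}$, and the ring-category hypothesis $\operatorname{End}_{\mathcal{A}}(1)=k$ forces $|I|=1$, i.e.\ $1=1_1$ is simple. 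So it suffices to prove (ii), which I would split into two claims: (a) the unit object of a multiring category with left duals is semisimple; (b) in any $k$-linear abelian monoidal category with finite-dimensional Hom-spaces, if $1$ is semisimple of finite length then it is multiplicity-free, $1=\bigoplus_i 1_i$ with the $1_i$ pairwise non-isomorphic and simple.

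Claim (b) is an Eckmann--Hilton argument and does not use left duals. The ring $R:=\operatorname{End}_{\mathcal{A}}(1)$ carries two a priori different unital $k$-bilinear multiplications: composition $\circ$, and the product $f\star g:=\lambda\circ(f\otimes g)\circ\lambda^{-1}$ built from the unit constraint $\lambda\colon 1\otimes 1\xrightarrow{\ \sim\ }1$. Both have $\operatorname{id}_1$ as a two-sided unit --- for $\star$ this uses naturality of the left and right unit constraints together with the standard coherence identity $\ell_1=r_1$ --- and they satisfy the interchange law since $\otimes$ is a bifunctor; hence by Eckmann--Hilton $\circ=\star$ and $R$ is commutative. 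If in addition $1$ is semisimple of finite length, writing $1\cong\bigoplus_i S_i^{\oplus n_i}$ for its isotypic decomposition we get $R\cong\prod_i M_{n_i}(\operatorname{End}_{\mathcal{A}}(S_i))\cong\prod_i M_{n_i}(k)$, and commutativity forces each $n_i=1$, which is (b).

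The substance is (a), and I would prove the stronger statement that \emph{every} subobject $\iota\colon J\hookrightarrow 1$ is a direct summand of $1$; since $1$ has finite length, this gives semisimplicity. Fix such a $J$. Using only biexactness of $\otimes$ and monoidal coherence, one checks by a diagram chase on $\iota\otimes\iota\colon J\otimes J\to 1\otimes 1$ (bifunctoriality, naturality of $\ell$ and $r$, and $\ell_1=r_1$) that the two composites $\ell_J\circ(\iota\otimes\operatorname{id}_J)$ and $r_J\circ(\operatorname{id}_J\otimes\iota)$ from $J\otimes J$ to $J$ agree, and that this common morphism $m_J\colon J\otimes J\to J$ is a monomorphism. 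Now bring in a left dual $(J^*,\operatorname{ev}_J,\operatorname{coev}_J)$ of $J$: tensoring $\iota$ with $\operatorname{id}_{J^*}$ gives a monomorphism $J\otimes J^*\hookrightarrow 1\otimes J^*\cong J^*$, and, working inside the adjunction $J^*\otimes(-)\dashv J\otimes(-)$ afforded by the left dual, one combines $\operatorname{coev}_J\colon 1\to J\otimes J^*$, this monomorphism, $\operatorname{ev}_J$, and the triangle identities to manufacture a morphism $1\to J$ that splits $\iota$ (after composing with a suitable automorphism of $J$ if necessary).

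The step I expect to be the main obstacle is exactly this last construction of the splitting, because we are given \emph{only} left duals, not full rigidity. With two-sided duals one could simply dualize the monomorphism $J\hookrightarrow 1$ to an epimorphism $1\cong 1^{*}\twoheadrightarrow J^{*}$ and play it against the monomorphism $J^{**}\hookrightarrow 1$ obtained by dualizing again; but with one-sided duals the functor $(-)^{*}$ need not be an exact anti-equivalence, so the retraction must be teased out of the evaluation and coevaluation morphisms using biexactness alone. Everything else --- deducing semisimplicity from (a), and then (b) and (i) --- is the bookkeeping indicated above.
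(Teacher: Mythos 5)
The first thing to say is that the paper contains no proof of this statement: it is quoted verbatim as Theorem~4.3.8 of \cite{MR3242743} in the preliminaries, and the proof is entirely deferred to that reference. So there is no in-paper argument to measure your reconstruction against; I can only assess it on its own terms.

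Your reductions are correct and standard: (ii) implies (i) because $\operatorname{End}_{\mathcal{A}}(1)=k$ together with Schur's lemma over an algebraically closed field forces $|I|=1$; and once $1$ is known to be semisimple of finite length, the Eckmann--Hilton commutativity of $\operatorname{End}_{\mathcal{A}}(1)$ rules out multiplicities. The difficulty is that your claim (a) --- every subobject $\iota\colon J\hookrightarrow 1$ is a direct summand --- \emph{is} the theorem, and you do not prove it. You list the ingredients ($\operatorname{coev}_J$, the monomorphism $J\otimes J^*\hookrightarrow J^*$, $\operatorname{ev}_J$, the triangle identities) and then say one ``manufactures'' a retraction, while yourself flagging this as the main obstacle. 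A proposal that defers precisely the one non-routine step is a genuine gap, not a proof.

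Two concrete remarks on closing it. First, the structural fact to isolate is not that $m_J=\ell_J\circ(\iota\otimes\operatorname{id}_J)\colon J\otimes J\to J$ is a monomorphism (immediate from biexactness) but that it is an \emph{isomorphism}, equivalently that $J\otimes(1/J)=0=(1/J)\otimes J$. This needs no duals: writing $p\colon 1\to Y:=1/J$, naturality of the unit constraints and $\ell_1=r_1$ give $\ell_Y\circ(\iota\otimes p)=p\circ\iota\circ r_J=0$, so $\iota\otimes p=0$; but $\iota\otimes p$ factors as the epimorphism $\operatorname{id}_J\otimes p$ followed by the monomorphism $\iota\otimes\operatorname{id}_Y$, forcing $J\otimes Y=0$. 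Second, even granting $J\otimes J\cong J$ and $J\otimes Y=Y\otimes J=0$, the passage to an actual splitting of $0\to J\to 1\to Y\to 0$ is where the left dual must really be used (via the adjunctions $\operatorname{Hom}(A\otimes Y,B)\cong\operatorname{Hom}(A,B\otimes Y^*)$ and the nondegeneracy of $\operatorname{ev}$ and $\operatorname{coev}$); for instance, a section of $p$ amounts to lifting $\operatorname{coev}_Y\colon 1\to Y\otimes Y^*$ along the epimorphism $p\otimes\operatorname{id}_{Y^*}\colon Y^*\to Y\otimes Y^*$, and showing that this lift exists is exactly the point at which your sketch stops. Until that retraction is exhibited, the argument is incomplete; I would follow \cite[\S 4.3]{MR3242743} directly rather than re-derive it.
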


Note that $1_i$ is the image of $p_i$, where $\{p_i \}_{i \in I}$ is the set of primitive idempotents of the algebra $\mathrm{End}(1)$. And $1= \bigoplus\limits_{i\in I} 1_i$.

We also write down the definition of two-sided Serre tensor-ideal. In some literature, for example in \cite{MR4136434}, tensor-ideal is written as $\otimes$-ideal, and it means one side absorption. In some other literature, for example in \cite{MR4477957}, the name tensor-ideal is used. 

\begin{definition}
Let $\mathcal{A}$ be an abelian monoidal category. A Serre subcategory $\mathcal{C}$ of $\mathcal{A}$ is called a two-sided Serre tensor-ideal of $\mathcal{A}$ if for any $X \in \mathcal{A}$, $Y \in \mathcal{C}$, we have $X\otimes Y \in \mathcal{C}$ and $Y\otimes X \in \mathcal{C}$.
\end{definition}

\section{Localization preserves locally finiteness}

Let $\mathcal{A}$ be an abelian category with a Serre subcategory $\mathcal{C}$. As proved in \cite{gabriel1962categories}, $\mathcal{A}/\mathcal{C}$ is an abelian category. This conclusion allows us to discuss the length of an object in $\mathcal{A}/\mathcal{C}$.

We have the following lemma about simple objects in $\mathcal{A}/\mathcal{C}$.

\begin{lemma}
Every simple object in $\mathcal{A}$ has length $1$ or $0$ in $\mathcal{A}/\mathcal{C}$.    
\end{lemma}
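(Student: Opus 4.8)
The plan is to treat separately the two possibilities for a simple object $S$ of $\mathcal{A}$. Applying the lemma of Gabriel recalled above to the morphism $u=\mathrm{id}_S$, the morphism $\mathrm{id}_{TS}=Tu$ is zero if and only if $\mathrm{Im}\ u=S\in\mathcal{C}$; hence $TS=0$ in $\mathcal{A}/\mathcal{C}$ precisely when $S\in\mathcal{C}$, in which case $S$ has length $0$ in $\mathcal{A}/\mathcal{C}$. So it suffices to show that if $S\notin\mathcal{C}$ then the nonzero object $TS$ is simple in the abelian category $\mathcal{A}/\mathcal{C}$, i.e. has no proper nonzero subobject; this gives length $1$.

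Assume $S\notin\mathcal{C}$ and let $V\hookrightarrow TS$ be a nonzero subobject in $\mathcal{A}/\mathcal{C}$, with inclusion $i$. Since the objects of $\mathcal{A}/\mathcal{C}$ are those of $\mathcal{A}$, we view $i$ as a nonzero element of $\mathrm{Hom}_{\mathcal{A}/\mathcal{C}}(V,S)=\varinjlim_{V',S'}\mathrm{Hom}_{\mathcal{A}}(V',S/S')$, the colimit running over subobjects with $V/V',S'\in\mathcal{C}$. As this system is directed, $i$ is represented by some morphism $f:V'\to S/S'$ of $\mathcal{A}$. Because $S'\subseteq S$ lies in $\mathcal{C}$ while $S$, being simple, does not, we must have $S'=0$; thus $f:V'\to S$, and, up to the canonical isomorphism $TV'\cong TV$ (valid since $V/V'\in\mathcal{C}$), we have $i=Tf$.

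Now I would feed $f$ through Gabriel's criterion. Since $i\neq 0$ we get $Tf\neq 0$, so $\mathrm{Im}\ f\notin\mathcal{C}$; as $\mathrm{Im}\ f$ is a subobject of the simple object $S$ and $0\in\mathcal{C}$, this forces $\mathrm{Im}\ f=S$, i.e. $f$ is an epimorphism, so $\mathrm{Coker}\ f=0\in\mathcal{C}$ and therefore $Tf$ is an epimorphism. On the other hand $i$, being the inclusion of a subobject, is a monomorphism, hence so is $Tf$. Thus $Tf$ is simultaneously monic and epic in the abelian category $\mathcal{A}/\mathcal{C}$, hence an isomorphism, and so is $i$; that is, $V=TS$. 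Consequently $TS$ has no proper nonzero subobject, so it is simple and has length $1$, which finishes the argument.

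The only point that requires care is the second paragraph: one must make sure that a nonzero morphism $V\to S$ in $\mathcal{A}/\mathcal{C}$ is genuinely (up to the canonical isomorphisms built into $T$) of the form $Tf$ for a morphism $f$ of $\mathcal{A}$ defined on a subobject $V'$ with $V/V'\in\mathcal{C}$ and with codomain $S$ itself rather than a proper quotient of $S$. This combines the explicit description of $\mathrm{Hom}_{\mathcal{A}/\mathcal{C}}$ recalled after the definition of the quotient category with the simplicity of $S$; after that, everything is a formal consequence of Gabriel's lemma and the exactness of $T$.
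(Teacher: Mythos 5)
Your proposal is correct and follows essentially the same route as the paper: represent the subobject inclusion by a morphism $f:V'\to S$ in $\mathcal{A}$ (using the directedness of the colimit and the simplicity of $S$ to reduce to codomain $S$ itself), then show $Tf$ is both epi and mono, hence an isomorphism. If anything, your version is slightly more careful than the paper's, since you explicitly rule out $S'\neq 0$ and handle the nonzero-ness of the representing morphism before invoking simplicity.
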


\begin{proof}
Suppose $X$ is a simple object in $\mathcal{A}$, consider its subobject $Y$ in $\mathcal{A}/\mathcal{C}$, and denote the corresponding monomorphism by $\bar{f}:Y \to X$.

We also view $Y$ as an object in $\mathcal{A}$. Because 
$$
\mathrm{Hom}_{\mathcal{A}/\mathcal{C}}(Y,X) = \varinjlim_{Y',X'} \mathrm{Hom}_{\mathcal{A}}(Y',X/X'),
$$
where $Y/Y',\ X' \in \mathcal{C}$, there exists a subobject $Y_1$ of $Y$ such that $Y/Y_1 \in \mathcal{C}$ such that the following diagram commute

$$
\xymatrix{
TY_1 \ar[r]^{Ti} \ar[d]_{Tf} & TY \ar[d]^{\bar{f}}\\
TX \ar@{=}[r] & TX
}
$$
where $T$ is the canonical functor, $i$ is the injection, and $f:Y_1 \to X$ is an epimorphism since $X$ is simple in $\mathcal{A}$. Thus, we have the following exact sequence
$$
\xymatrix{
0 \ar[r] & \mathrm{Ker}\ f \ar[r] & Y_1 \ar[r]^{f} & X \ar[r] & 0. 
}
$$
Because the canonical functor $T$ is exact, we have the following exact sequence
$$
\xymatrix{
0 \ar[r] & T\mathrm{Ker}\ f \ar[r] & TY_1 \ar[r]^{Tf} & TX \ar[r] & 0. 
}
$$
This means $Tf$ is an epimorphism in $\mathcal{A}/\mathcal{C}$. Note that $Ti$ is an isomorphism in $\mathcal{A}/\mathcal{C}$ and $\bar{f}$ is a monomorphism, we know $Tf= \bar{f}\circ Ti$ is a monomorphism. Therefore, $Tf$ is an isomorphism. Consequently, $\bar{f}$ is an isomorphism. Thus, every monomorphism in $\mathcal{A}/\mathcal{C}$ to $X$ is an isomorphism in $\mathcal{A}/\mathcal{C}$. It shows that $X$ has length $1$ or $0$ in $\mathcal{A}/\mathcal{C}$. Note that the case of length $0$ means $X$ is in $\mathcal{C}$, and it is zero in $\mathcal{A}/\mathcal{C}$.
\end{proof}

Next, we prove that the localization preserves finite length.

\begin{proposition}
Suppose every object in $\mathcal{A}$ has finite length, then every object in $\mathcal{A}/\mathcal{C}$ has finite length.
\end{proposition}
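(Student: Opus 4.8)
The plan is to reduce the statement to the previous lemma by induction on the length of an object in $\mathcal{A}$. Let $X$ be an object of $\mathcal{A}$; since $\mathcal{A}$ has finite length, $X$ admits a composition series
$$
0 = X_0 \subset X_1 \subset \cdots \subset X_n = X
$$
in $\mathcal{A}$, with each quotient $X_j/X_{j-1}$ simple in $\mathcal{A}$. Because the canonical functor $T$ is exact, applying $T$ yields a chain
$$
0 = TX_0 \subset TX_1 \subset \cdots \subset TX_n = TX
$$
in $\mathcal{A}/\mathcal{C}$ (monomorphisms are preserved by exactness), and each successive quotient is $T(X_j/X_{j-1})$, the image of a simple object of $\mathcal{A}$.

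By the preceding lemma, each $T(X_j/X_{j-1})$ has length $1$ or $0$ in $\mathcal{A}/\mathcal{C}$. Deleting the steps where the quotient is zero (i.e. where $X_j/X_{j-1} \in \mathcal{C}$), one is left with a filtration of $TX$ in $\mathcal{A}/\mathcal{C}$ whose successive quotients are simple of length $1$. Hence $TX$ has a finite composition series in $\mathcal{A}/\mathcal{C}$, of length at most $n$, so $TX$ has finite length. Since every object of $\mathcal{A}/\mathcal{C}$ is $TX$ for some $X \in \mathcal{A}$ (objects of the quotient coincide with objects of $\mathcal{A}$), this proves the claim.

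The only subtlety to be careful about is the passage from a chain with successive quotients of length $\le 1$ to the conclusion that the total length is finite and bounded by $n$: one should invoke the Jordan–Hölder property in the abelian category $\mathcal{A}/\mathcal{C}$ (which we may, since $\mathcal{A}/\mathcal{C}$ is abelian by Gabriel's theorem), using the fact that a finite filtration whose graded pieces all have finite length forces the object to have finite length equal to the sum of those lengths. I expect this bookkeeping — rather than any deep categorical input — to be the main thing to write out carefully; the essential content is entirely supplied by the lemma on simple objects and the exactness of $T$.
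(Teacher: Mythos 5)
Your proposal is correct and follows essentially the same route as the paper: take a composition series of $X$ in $\mathcal{A}$, apply the exact functor $T$, invoke the lemma that simple objects have length $0$ or $1$ in $\mathcal{A}/\mathcal{C}$, and conclude by additivity of length along the filtration that $l(TX) \leq n$. The bookkeeping step you flag is handled in the paper exactly as you describe, via $l(X_{i+1}) = l(X_i) + l(X_{i+1}/X_i)$.
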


\begin{proof}
For an arbitrary object $X$ in $\mathcal{A}$, it has finite length in $\mathcal{A}$ and one can assume its Jordan-Hölder series as following without loss of generality.
$$
0=X_0 \subset X_1 \subset \cdots \subset X_{n-1} \subset X_n = X
$$
Since $X_{i+1}/X_i$ is a simple object in $\mathcal{A}$, it has length $1$ or $0$ in $\mathcal{A}/\mathcal{C}$ by the above Lemma. Because 
$$
    l(X_{i+1})=l(X_i)+l(X_{i+1}/X_i),\  \text{ for all }\  0 \leq i \leq n-1,
$$
we know that
$$
\begin{aligned}
    l(X) &=l(X_n) =l(X_{n-1})+l(X_n/X_{n-1})\\ 
    &= l(X_{n-2})+l(X_{n-1}/T_{n-2})+l(X_{n-1}/T_n)\\
    &= \cdots\\
    &= \sum_{i=0}^{n-1} l(X_{i+1}/X_i)\\
    &\leq n.
\end{aligned}
$$
This means $X$ has finite length in $\mathcal{A}/\mathcal{C}$, which is actually smaller than its length in $\mathcal{A}$.
\end{proof}

In order to show that the quotient category of a locally finite abelian category is locally finite, we are required to prove that the Hom-spaces of the quotient category are finite dimensional.

\begin{lemma}
Let $\mathcal{A}$ be a locally finite $k$-linear abelian category, then $\mathrm{Hom}_{\mathcal{A}/\mathcal{C}}(M,N)$ is a finite dimensional vector space, where $M,N$ are objects in $\mathcal{A}$.
\end{lemma}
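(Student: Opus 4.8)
The plan is to reduce, by a dévissage argument, to the case where $M$ and $N$ are simple objects of $\mathcal{A}/\mathcal{C}$, and then to observe that for simple objects the direct system defining the morphism space in $\mathcal{A}/\mathcal{C}$ collapses to a single term.

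\textbf{Reduction to the simple case.} In any abelian category $\mathrm{Hom}$ is left exact in each variable, so a short exact sequence $0\to M_1\to M_2\to M_3\to 0$ in $\mathcal{A}/\mathcal{C}$ gives exactness of $0\to \mathrm{Hom}_{\mathcal{A}/\mathcal{C}}(M_3,N)\to \mathrm{Hom}_{\mathcal{A}/\mathcal{C}}(M_2,N)\to \mathrm{Hom}_{\mathcal{A}/\mathcal{C}}(M_1,N)$, and dually in the second variable; hence $\dim_k\mathrm{Hom}_{\mathcal{A}/\mathcal{C}}(M_2,N)\le \dim_k\mathrm{Hom}_{\mathcal{A}/\mathcal{C}}(M_1,N)+\dim_k\mathrm{Hom}_{\mathcal{A}/\mathcal{C}}(M_3,N)$. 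Now fix objects $M,N\in\mathcal{A}$, choose a Jordan--Hölder series of $M$ in $\mathcal{A}$ with simple factors $S_1,\dots,S_m$ and one of $N$ with simple factors $S_1',\dots,S_n'$. Since $T$ is exact it carries these to filtrations of $TM$ and $TN$ with successive quotients $TS_i$ and $TS_j'$, each of which has length at most one in $\mathcal{A}/\mathcal{C}$ by the first lemma of this section, i.e. is simple or zero. Iterating the displayed inequality in both variables yields
$$
\dim_k\mathrm{Hom}_{\mathcal{A}/\mathcal{C}}(M,N)\ \le\ \sum_{i=1}^{m}\sum_{j=1}^{n}\dim_k\mathrm{Hom}_{\mathcal{A}/\mathcal{C}}(TS_i,TS_j'),
$$
with the convention that a term is $0$ whenever $TS_i=0$ or $TS_j'=0$. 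Thus it suffices to bound $\dim_k\mathrm{Hom}_{\mathcal{A}/\mathcal{C}}(TS,TS')$ for simple objects $S,S'$ of $\mathcal{A}$.

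\textbf{The simple case.} If $S$ or $S'$ lies in $\mathcal{C}$ then the Hom-space is $0$, so assume $S,S'\notin\mathcal{C}$. By definition
$$
\mathrm{Hom}_{\mathcal{A}/\mathcal{C}}(TS,TS')=\varinjlim_{S_1,S_2}\mathrm{Hom}_{\mathcal{A}}(S_1,S'/S_2),
$$
where $S_1$ runs over subobjects of $S$ with $S/S_1\in\mathcal{C}$ and $S_2$ over subobjects of $S'$ with $S_2\in\mathcal{C}$. Since $S$ is simple and $S\notin\mathcal{C}$, the only admissible choice is $S_1=S$; since $S'$ is simple and $S'\notin\mathcal{C}$, the only admissible choice is $S_2=0$. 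Hence the indexing set is a single point, and $\mathrm{Hom}_{\mathcal{A}/\mathcal{C}}(TS,TS')\cong\mathrm{Hom}_{\mathcal{A}}(S,S')$, which is finite dimensional because $\mathcal{A}$ is locally finite. Together with the previous step this proves the lemma.

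I expect the only real obstacle to be recognizing that the direct limit is trivial in the simple case; everything else is routine bookkeeping with left exactness and composition series. One should also check that the reduction is legitimate, namely that the filtrations of $TM$ and $TN$ above have finitely many simple-or-zero factors — this holds because we transported finite composition series from $\mathcal{A}$ through the exact functor $T$ (it is also a consequence of the Proposition just proved), so no circularity arises.
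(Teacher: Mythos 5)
Your proof is correct, but it takes a genuinely different route from the paper's. The paper argues by a double induction on the lengths of $M$ and $N$ in $\mathcal{A}$, with a trichotomy at each step: either $M$ has a proper subobject $X$ with $M/X\in\mathcal{C}$ (replace $M$ by $X$), or $N$ has a nonzero subobject $Y\in\mathcal{C}$ (replace $N$ by $N/Y$), or neither occurs, in which case the direct system defining $\mathrm{Hom}_{\mathcal{A}/\mathcal{C}}(M,N)$ collapses to the single term $\mathrm{Hom}_{\mathcal{A}}(M,N)$. You instead run a dévissage in the abelian category $\mathcal{A}/\mathcal{C}$: left exactness of $\mathrm{Hom}$ in each variable, applied to the images under the exact functor $T$ of composition series in $\mathcal{A}$, reduces everything to pairs of simple objects of $\mathcal{A}$ not in $\mathcal{C}$, and there the collapse of the direct system is immediate because a simple object has no admissible proper sub- or quotient objects. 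Both arguments ultimately rest on the same two ingredients (Gabriel's theorem that $\mathcal{A}/\mathcal{C}$ is abelian and $T$ is exact, and the observation that the limit trivializes when no nontrivial index is admissible), but your version confines the collapse argument to the simple case, where it is a one-line check, and in exchange yields the explicit bound $\dim_k\mathrm{Hom}_{\mathcal{A}/\mathcal{C}}(M,N)\le\sum_{i,j}\dim_k\mathrm{Hom}_{\mathcal{A}}(S_i,S_j')$; the paper's induction avoids any appeal to subadditivity of $\dim\mathrm{Hom}$ along filtrations but pays for it with the bookkeeping of the two-variable induction. Your closing remark correctly defuses the one possible circularity (finiteness of the filtrations of $TM$, $TN$ comes from transporting composition series through $T$, not from Proposition 3.2).
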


\begin{proof}
By definition, $\mathrm{Hom}_{\mathcal{A}/\mathcal{C}}(M,N)$ is obviously a vector space. We now prove $\mathrm{Hom}_{\mathcal{A}/\mathcal{C}}(M,N)$ is finite dimensional by induction on the lengths of $M,N$ in $\mathcal{A}$. Firstly, suppose $l(M)=l(N)=1$. If $M \in \mathcal{C}$ or $N \in \mathcal{C}$, then $\mathrm{Hom}_{\mathcal{A}/\mathcal{C}}(M,N)=0$. If $M,N \notin \mathcal{C}$, it is clear that $\mathrm{Hom}_{\mathcal{A}/\mathcal{C}}(M,N) = \mathrm{Hom}_{\mathcal{A}}(M,N)$, which is finite dimensional.

Secondly, we want to show $\mathrm{Hom}_{\mathcal{A}/\mathcal{C}}(M,N)$ is a finite dimensional vector space for $l(N)=1$. We prove it by induction on the length of $M$, suppose $\mathrm{Hom}_{\mathcal{A}/\mathcal{C}}(M,N)$ is finite dimensional for all $M,\ N$ such that $l(M)\leq m$ and $l(N)=1$. Now we consider the case of $l(M)= m+1$ and $l(N)=1$. On one hand, if there is a subobject $X$ of $M$ such that $0 \leq l(X) \leq m$ and $M/X \in \mathcal{C}$, we know that $Ti_M^X$ is an isomorphism in $\mathcal{A}/\mathcal{C}$, where $i_M^X:X \to M$ is the monomorphism. This means that $\mathrm{Hom}_{\mathcal{A}/\mathcal{C}}(M,N) \cong \mathrm{Hom}_{\mathcal{A}/\mathcal{C}}(X,N)$, which is finite dimensional by the induction assumption. On the other hand, if for any subobject $X$ of $M$ satisfying $0 \leq l(X) \leq m$, $M/X \notin \mathcal{C}$. It follows that $\mathrm{Hom}_{\mathcal{A}/\mathcal{C}}(M,N) = \mathrm{Hom}_{\mathcal{A}}(M,N)$, which is finite dimensional. Therefore, $\mathrm{Hom}_{\mathcal{A}/\mathcal{C}}(M,N)$ is a finite dimensional vector space for $l(N)=1$. Similarly, $\mathrm{Hom}_{\mathcal{A}/\mathcal{C}}(M,N)$ is a finite dimensional vector space for $l(M)=1$.

Thirdly, given two positive integers $m,n \geq 1$, suppose $\mathrm{Hom}_{\mathcal{A}/\mathcal{C}}(M,N)$ is a finite dimensional vector space for $l(M)\leq m$ and $l(N)\leq n+1$ and for $l(M)\leq m+1$ and $l(N)\leq n$. We consider the case of $l(M)= m+1$ and $l(N)= n+1$. 

If there is a subobject $X$ of $M$ such that $0 \leq l(X) \leq m$ and $M/X \in \mathcal{C}$, then $\mathrm{Hom}_{\mathcal{A}/\mathcal{C}}(M,N) \cong \mathrm{Hom}_{\mathcal{A}/\mathcal{C}}(X,N)$ since $Ti_M^X$ is an isomorphism in $\mathcal{A}/\mathcal{C}$, where $i_M^X:X \to M$ is the monomorphism. By the induction assumption, $\mathrm{Hom}_{\mathcal{A}/\mathcal{C}}(M,N) \cong \mathrm{Hom}_{\mathcal{A}/\mathcal{C}}(X,N)$ is finite dimensional.

If there is a subobject $Y$ of $N$ such that $1 \leq l(Y) \leq n+1$ and $Y \in \mathcal{C}$, then $\mathrm{Hom}_{\mathcal{A}/\mathcal{C}}(M,N) \cong \mathrm{Hom}_{\mathcal{A}/\mathcal{C}}(M,N/Y)$ since $Tp_{N/Y}^N$ is an isomorphism in $\mathcal{A}/\mathcal{C}$, where $p_{N/Y}^N:N \to N/Y$ is the epimorphism. By the induction assumption, $\mathrm{Hom}_{\mathcal{A}/\mathcal{C}}(M,N) \cong \mathrm{Hom}_{\mathcal{A}/\mathcal{C}}(M,N/Y)$ is finite dimensional.

If for any subobject $X$ of $M$ and any subobject $Y$ of $N$ satisfying $0 \leq l(X) \leq m$ and $1 \leq l(Y) \leq n+1$, $M/X, Y \notin \mathcal{C}$. It follows that $\mathrm{Hom}_{\mathcal{A}/\mathcal{C}}(M,N) = \mathrm{Hom}_{\mathcal{A}}(M,N)$, which is finite dimensional. 

In summary, $\mathrm{Hom}_{\mathcal{A}/\mathcal{C}}(M,N)$ is a finite dimensional vector space for $l(M)= m+1$ and $l(N)= n+1$. Consequently, we obtain that $\mathrm{Hom}_{\mathcal{A}/\mathcal{C}}(M,N)$ is a finite dimensional vector space for any objects $M,N$ in $\mathcal{A}$.
\end{proof}

It follows directly from Proposition 3.2 and Lemma 3.3 that $\mathcal{A}/\mathcal{C}$ is locally finite. 

\begin{proposition}
The quotient category $\mathcal{A}/\mathcal{C}$ of a locally finite $k$-linear abelian category $\mathcal{A}$ is a locally finite $k$-linear abelian category. 
\end{proposition}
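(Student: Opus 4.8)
The plan is to assemble the statement from the pieces already established, since the real content lies in Proposition 3.2 and Lemma 3.3. First I would recall, as used repeatedly above, that by Gabriel's theorem \cite{gabriel1962categories} the quotient $\mathcal{A}/\mathcal{C}$ is an abelian category and the canonical functor $T:\mathcal{A}\to\mathcal{A}/\mathcal{C}$ is exact. So the only genuinely new thing to do before invoking the two preceding results is to exhibit a $k$-linear structure on $\mathcal{A}/\mathcal{C}$ and to check that $T$ respects it.

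For $k$-linearity I would observe that, by definition,
$\mathrm{Hom}_{\mathcal{A}/\mathcal{C}}(M,N)=\varinjlim_{M',N'}\mathrm{Hom}_{\mathcal{A}}(M',N/N')$
is a filtered colimit of the $k$-vector spaces $\mathrm{Hom}_{\mathcal{A}}(M',N/N')$ along transition maps given by restriction along inclusions $M''\hookrightarrow M'$ and post-composition along quotient maps $N/N'\twoheadrightarrow N/N''$, all of which are $k$-linear since $\mathcal{A}$ is $k$-linear. Hence each Hom-set of $\mathcal{A}/\mathcal{C}$ carries a canonical $k$-vector space structure, and the composition law (which is induced from composition in $\mathcal{A}$ at each stage of the colimit) is $k$-bilinear. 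Thus $\mathcal{A}/\mathcal{C}$ is a $k$-linear abelian category and $T$ is a $k$-linear exact functor.

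It then remains to verify the two conditions in the definition of locally finite. Since the objects of $\mathcal{A}/\mathcal{C}$ are by definition the objects of $\mathcal{A}$, condition (2) — that every object of $\mathcal{A}/\mathcal{C}$ has finite length — is precisely Proposition 3.2, applied using that $\mathcal{A}$ itself has the finite-length property. Condition (1) — that $\mathrm{Hom}_{\mathcal{A}/\mathcal{C}}(M,N)$ is finite-dimensional over $k$ for all objects $M,N$ — is precisely Lemma 3.3, which applies because $\mathcal{A}$ is locally finite. Combining these with the $k$-linear abelian structure gives the claim.

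I do not expect a real obstacle here; the proposition is essentially a corollary. The only point requiring any care is the compatibility of the colimit description of the Hom-spaces with the vector space structure and with composition, and this is a routine verification. In the write-up I would keep this step brief and then simply cite Proposition 3.2 and Lemma 3.3.
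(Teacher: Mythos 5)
Your proposal is correct and matches the paper's approach exactly: the paper also treats this proposition as an immediate consequence of Proposition 3.2 and Lemma 3.3, offering no further proof. Your additional remarks on the $k$-linear structure of the colimit Hom-spaces are a reasonable (and harmless) elaboration of what the paper leaves implicit.
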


\section{Tensor product in Serre quotient category}

\subsection{Definition}
In order to define the tensor product in the Serre quotient category of an abelian monoidal category, we first introduce a lemma.

\begin{lemma}
Let $\mathcal{A}$ be an abelian monoidal category, $\mathcal{C}$ be a two-sided Serre tensor-ideal of $\mathcal{A}$. Suppose $f,g$ are morphisms in $\mathcal{A}$ such that $\mathrm{ker}\ f$, $\mathrm{ker}\ g$ belong to $\mathcal{C}$, then $\mathrm{ker}(f\otimes g)$ belongs to $\mathcal{C}$. Similarly, if $\mathrm{coker}\ f$, $\mathrm{coker}\ g$ belong to $\mathcal{C}$, then $\mathrm{coker}(f\otimes g)$ belongs to $\mathcal{C}$.
\end{lemma}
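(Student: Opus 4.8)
The plan is to reduce to the case where one of $f,g$ is an identity morphism by factoring the tensor product, and then invoke the one-sided exactness of $\otimes$ (which is available here, the tensor product being biexact). Write $f\colon M\to M'$ and $g\colon N\to N'$, and decompose
$$
f\otimes g \;=\; (f\otimes \id_{N'})\circ(\id_{M}\otimes g)\colon\quad M\otimes N \xrightarrow{\ \id_M\otimes g\ } M\otimes N' \xrightarrow{\ f\otimes \id_{N'}\ } M'\otimes N'.
$$
For any composable pair $A\xrightarrow{p}B\xrightarrow{q}C$ in an abelian category, $\mathrm{ker}\,p$ is a subobject of $\mathrm{ker}(qp)$ and $p$ carries $\mathrm{ker}(qp)$ into $\mathrm{ker}\,q$ with kernel exactly $\mathrm{ker}\,p$; hence there is a short exact sequence $0\to\mathrm{ker}\,p\to\mathrm{ker}(qp)\to Q\to 0$ with $Q$ a subobject of $\mathrm{ker}\,q$. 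Since $\mathcal{C}$ is a Serre subcategory it is closed under subobjects and extensions, so it is enough to show that $\mathrm{ker}(\id_M\otimes g)\in\mathcal{C}$ and $\mathrm{ker}(f\otimes\id_{N'})\in\mathcal{C}$.

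For the first of these I would apply the left-exact functor $M\otimes(-)$ to the exact sequence $0\to\mathrm{ker}\,g\to N\xrightarrow{g}N'$, obtaining $0\to M\otimes\mathrm{ker}\,g\to M\otimes N\xrightarrow{\id_M\otimes g}M\otimes N'$; thus $\mathrm{ker}(\id_M\otimes g)\cong M\otimes\mathrm{ker}\,g$, and this lies in $\mathcal{C}$ because $\mathrm{ker}\,g\in\mathcal{C}$ and $\mathcal{C}$ is a two-sided Serre tensor-ideal. Symmetrically, applying $(-)\otimes N'$ to $0\to\mathrm{ker}\,f\to M\xrightarrow{f}M'$ gives $\mathrm{ker}(f\otimes\id_{N'})\cong\mathrm{ker}\,f\otimes N'\in\mathcal{C}$. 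Combining these with the previous paragraph yields $\mathrm{ker}(f\otimes g)\in\mathcal{C}$.

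The cokernel statement follows by the dual argument: for $A\xrightarrow{p}B\xrightarrow{q}C$ there is a short exact sequence $0\to Q'\to\mathrm{coker}(qp)\to\mathrm{coker}\,q\to 0$ with $Q'$ a quotient of $\mathrm{coker}\,p$, so closure of $\mathcal{C}$ under quotients and extensions reduces us to $\mathrm{coker}(\id_M\otimes g)$ and $\mathrm{coker}(f\otimes\id_{N'})$; using the right-exactness of $\otimes$ these are $M\otimes\mathrm{coker}\,g$ and $\mathrm{coker}\,f\otimes N'$, both in $\mathcal{C}$. (Alternatively one may pass to $\mathcal{A}^{\mathrm{op}}$, which is again an abelian monoidal category in which $\mathcal{C}$ is a two-sided Serre tensor-ideal, and quote the kernel case.) The only step that needs genuine care is the homological observation about kernels and cokernels of a composite — checking that the connecting object really is a sub- (resp. quotient) object of $\mathrm{ker}\,q$ (resp. $\mathrm{coker}\,p$) — together with making sure the relevant one-sided exactness of $\otimes$ is available in the setting at hand; the rest is just the Serre and tensor-ideal closure conditions.
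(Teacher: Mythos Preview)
Your proof is correct and follows essentially the same route as the paper's: factor $f\otimes g$ through the two one-sided tensors, relate the kernel of the composite to the kernels of the factors by an exact sequence (the paper sets up a diagram and invokes the snake lemma, you use the equivalent elementary observation that $\ker(qp)/\ker p$ embeds in $\ker q$), then identify $\ker(\id\otimes g)$ and $\ker(f\otimes\id)$ via exactness of $\otimes$ and the tensor-ideal property. Both arguments need left (and, for the cokernel half, right) exactness of the tensor product---not literally stated in the lemma's hypotheses but assumed throughout the paper---and you rightly flag this as the one point that needs checking.
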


\begin{proof}
Consider
$$
f:M \to N \ \text{and}\ g:X\to Y.
$$
Note that there is a composition
$$
\begin{tikzcd}
f\otimes g: M\otimes X \arrow[r,"f\otimes id_X"] & N\otimes X \arrow[r,"id_N\otimes g"] & N\otimes Y,
\end{tikzcd}
$$
and there is a commutative diagram
$$
\begin{tikzcd}
& M\otimes X \arrow[r,"f\otimes id"] \arrow[d,"f\otimes g"] & N\otimes X \arrow[r] \arrow[d,"id\otimes g"] & \mathrm{coker}(f\otimes id) \arrow[r] \arrow[d] & 0\\
0 \arrow[r] & N\otimes Y \arrow[r,"id"] & N\otimes Y \arrow[r] & 0 &
\end{tikzcd}
$$
By snake lemma, this implies an exact sequence
$$
\xymatrix{
0 \ar[r] & \mathrm{ker}(f\otimes id) \ar[r] & \mathrm{ker}(f\otimes g) \ar[r] & \mathrm{ker}(id\otimes g) \ar[llld] \\
 \mathrm{coker}(f\otimes id)  \ar[r] & \mathrm{coker}(f\otimes g) \ar[r] & \mathrm{coker}(id\otimes g) \ar[r] & 0
}
$$
which means 
$$
\left\{
\begin{aligned}
&\mathrm{ker}(f\otimes id) \to  \mathrm{ker}(f\otimes g) \to  \mathrm{ker}(id\otimes g)\\
&\mathrm{coker}(f\otimes id) \to \mathrm{coker}(f\otimes g) \to  \mathrm{coker}(id\otimes g)
\end{aligned}
\right.
$$
exact. Note that $\mathrm{ker}(f\otimes id)=\mathrm{ker}\ f\otimes X \in \mathcal{C}$, and $\mathrm{ker}(id\otimes g)=N\otimes \mathrm{ker}\ g \in \mathcal{C}$ provided $\mathrm{ker}\ f$, $\mathrm{ker}\ g$ belong to $\mathcal{C}$. By Lemma 2.2, we know $\mathrm{ker}(f\otimes g) \in \mathcal{C}$. Similarly, $\mathrm{coker}(f\otimes g) \in \mathcal{C}$ provided $\mathrm{coker}\ f$, $\mathrm{coker}\ g$ belong to $\mathcal{C}$.
\end{proof}

The following two lemmas study the tensor product of two subobjects, and the tensor product of two quotient objects respectively.

\begin{lemma}
Let $\mathcal{A}$ be an abelian monoidal category with biexact tensor product. Let $M$, $X$ be two objects in $\mathcal{A}$, $M'$ be a subobject of $M$, $X'$ be a subobject of $X$, then $M'\otimes X'$ is a subobject of $M\otimes X$.
\end{lemma}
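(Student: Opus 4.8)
The plan is to exploit biexactness directly. Since $M'$ is a subobject of $M$, we have a short exact sequence $0 \to M' \to M \to M/M' \to 0$, and likewise $0 \to X' \to X \to X/X' \to 0$. I would first tensor the first sequence on the right by $X'$: because $-\otimes X'$ is exact, we obtain a short exact sequence $0 \to M'\otimes X' \to M\otimes X' \to (M/M')\otimes X' \to 0$, so in particular $M'\otimes X'$ is a subobject of $M\otimes X'$ via the monomorphism $i\otimes \id_{X'}$, where $i:M'\to M$ is the inclusion. Next I would tensor the second sequence on the left by $M$: since $M\otimes -$ is exact, $0 \to M\otimes X' \to M\otimes X \to M\otimes(X/X') \to 0$ is exact, so $M\otimes X'$ is a subobject of $M\otimes X$ via $\id_M \otimes j$, where $j:X'\to X$ is the inclusion.

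Composing these two monomorphisms gives a monomorphism $(\id_M\otimes j)\circ(i\otimes\id_{X'}) : M'\otimes X' \to M\otimes X$, and a composite of monomorphisms is a monomorphism in any category. By (bi)functoriality of $\otimes$ this composite equals $i\otimes j$, which is therefore the desired monomorphism exhibiting $M'\otimes X'$ as a subobject of $M\otimes X$.

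I do not anticipate a serious obstacle here; the only point requiring a little care is to check that the two natural ways of building the map $M'\otimes X' \to M\otimes X$ — factoring through $M\otimes X'$ versus through $M'\otimes X$ — agree, which follows from the interchange law $(\id_M\otimes j)\circ(i\otimes\id_{X'}) = i\otimes j = (i\otimes\id_X)\circ(\id_{M'}\otimes j)$ satisfied by the bifunctor $\otimes$. One should also record that the resulting subobject is, up to the canonical identification, independent of the order of the two factorizations, so that ``$M'\otimes X'$ is a subobject of $M\otimes X$'' is unambiguous; this is immediate from the same interchange identity. (Strictly, biexactness of $\otimes$ means each functor $-\otimes Z$ and $Z\otimes-$ is exact, hence in particular preserves monomorphisms, which is all that is used.)
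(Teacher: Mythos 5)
Your proposal is correct and follows essentially the same route as the paper: tensor the sequence for $M'\subset M$ on the right by $X'$, tensor the sequence for $X'\subset X$ on the left by $M$, and compose the two resulting monomorphisms to obtain $i_1\otimes i_2$. The extra remark about the interchange law is a harmless (and reasonable) addition.
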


\begin{proof}
Consider monomorphisms $i_1:M' \to M$ and $i_2:X'\to X$, and exact sequences
$$
\xymatrix{
0 \ar[r] & M' \ar[r]^{i_1} & M \ar[r] & \mathrm{coker}\ i_1 \ar[r] & 0; \\
0 \ar[r] & X' \ar[r]^{i_2} & X \ar[r] & \mathrm{coker}\ i_2 \ar[r] & 0.
}
$$
Because the tensor product is biexact, we have two following exact sequences
$$
\xymatrix{
0 \ar[r] & M' \otimes X' \ar[r]^{i_1 \otimes id} & M \otimes X' \ar[r] & \mathrm{coker}\ i_1 \otimes X' \ar[r] & 0; \\
0 \ar[r] & M\otimes X' \ar[r]^{id \otimes i_2} & M\otimes X \ar[r] & M\otimes \mathrm{coker}\ i_2 \ar[r] & 0.
}
$$
This means
$$
\begin{tikzcd}
i_1 \otimes i_2: M' \otimes X' \arrow[r,"i_1 \otimes id"] & M \otimes X' \arrow[r,"id \otimes i_2"] & M \otimes X
\end{tikzcd}
$$
is a monomorphism. Hence, $M'\otimes X'$ is a subobject of $M\otimes X$.
\end{proof}

\begin{lemma}
Let $\mathcal{A}$ be an abelian monoidal category with biexact tensor product. Let $N$, $Y$ be two objects in $\mathcal{A}$, $N/N'$ be a quotient object of $N$, $Y/Y'$ be a quotient object of $Y$, then $N/N'\otimes Y/Y'$ is a quotient object of $N\otimes Y$.
\end{lemma}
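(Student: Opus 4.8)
The plan is to dualise the proof of the preceding lemma (on subobjects), replacing subobject inclusions by quotient projections and monomorphisms by epimorphisms. First I would present the two quotient objects by short exact sequences: choose epimorphisms $p_1:N\to N/N'$ and $p_2:Y\to Y/Y'$ and record the exact rows
$$
\xymatrix{
0 \ar[r] & N' \ar[r] & N \ar[r]^{p_1} & N/N' \ar[r] & 0;\\
0 \ar[r] & Y' \ar[r] & Y \ar[r]^{p_2} & Y/Y' \ar[r] & 0.
}
$$

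Next I would invoke biexactness of $\otimes$ twice. Tensoring the first sequence on the right by $Y$ and the second on the left by $N/N'$ produces the exact rows
$$
\xymatrix{
0 \ar[r] & N'\otimes Y \ar[r] & N\otimes Y \ar[r]^{p_1\otimes\id} & N/N'\otimes Y \ar[r] & 0;\\
0 \ar[r] & N/N'\otimes Y' \ar[r] & N/N'\otimes Y \ar[r]^{\id\otimes p_2} & N/N'\otimes Y/Y' \ar[r] & 0,
}
$$
so that $p_1\otimes\id_Y$ and $\id_{N/N'}\otimes p_2$ are both epimorphisms. Finally, using the interchange law $p_1\otimes p_2=(\id_{N/N'}\otimes p_2)\circ(p_1\otimes\id_Y)$, I would conclude that $p_1\otimes p_2:N\otimes Y\to N/N'\otimes Y/Y'$ is a composite of epimorphisms, hence itself an epimorphism, which exhibits $N/N'\otimes Y/Y'$ as a quotient object of $N\otimes Y$.

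I do not expect a genuine obstacle here, since the statement is the exact dual of the subobject lemma just established. The only point to watch is that one should factor $p_1\otimes p_2$ through $N/N'\otimes Y$ rather than through $N\otimes Y/Y'$, so that both of its factors are visibly among the epimorphisms produced above; by functoriality of the tensor bifunctor either factorisation works, so this is a matter of bookkeeping only.
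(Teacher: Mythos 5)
Your proposal is correct and follows essentially the same route as the paper: apply biexactness to the two defining short exact sequences and write $p_1\otimes p_2$ as a composite of the two resulting epimorphisms. The only (immaterial) difference is that you factor through $N/N'\otimes Y$ whereas the paper factors through $N\otimes Y/Y'$; by functoriality of $\otimes$ both factorisations work equally well.
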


\begin{proof}
Consider epimorphisms $p_1:N \to N/N'$ and $p_2:Y\to Y/Y'$, and exact sequences
$$
\xymatrix{
0 \ar[r] & N' \ar[r] & N \ar[r]^{p_1} & N/N' \ar[r] & 0; \\
0 \ar[r] & Y' \ar[r] & Y \ar[r]^{p_2} & Y/Y' \ar[r] & 0; \\
}
$$
Because the tensor product is biexact, we have two following exact sequences
$$
\xymatrix{
0 \ar[r] & N' \otimes Y/Y' \ar[r] & N \otimes Y/Y' \ar[r]^{p_1 \otimes id} & N/N' \otimes Y/Y' \ar[r] & 0; \\
0 \ar[r] & N \otimes Y' \ar[r] & N\otimes Y \ar[r]^{id \otimes p_2} & N\otimes Y/Y' \ar[r] & 0.
}
$$
This means
$$
\begin{tikzcd}
p_1 \otimes p_2: N\otimes Y \arrow[r,"id \otimes p_2"] & N\otimes Y/Y' \arrow[r,"p_1 \otimes id"] & N/N' \otimes Y/Y'
\end{tikzcd}
$$
is an epimorphism. Hence, $N/N'\otimes Y/Y'$ is a quotient object of $N\otimes Y$.
\end{proof}

In fact, we have an isomorphism
$$
N/N' \otimes Y/Y' \cong N\otimes Y/\mathrm{ker}(p_1 \otimes p_2).
$$

Now, we can prove that $T(f\otimes g) \in \mathrm{Hom}_{\mathcal{A}/\mathcal{C}}(M\otimes X, N\otimes Y)$ for any $\bar{f}=Tf:M\to N$, $\bar{g}=Tg:X \to Y$ in $\mathcal{A}/\mathcal{C}$.

\begin{proposition}
Let $\mathcal{A}$ be an abelian monoidal category with biexact tensor product, $\mathcal{C}$ be a two-sided Serre tensor-ideal of $\mathcal{A}$. Let $\bar{f}:M\to N$, $\bar{g}:X \to Y$ be two morphisms in $\mathcal{A}/\mathcal{C}$. Then $T(f\otimes g)$ is a morphism in $\mathrm{Hom}_{\mathcal{A}/\mathcal{C}}(M\otimes X, N\otimes Y)$ where $f,g$ are in direct systems, and $T$ is the canonical functor.
\end{proposition}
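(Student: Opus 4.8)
The plan is to unwind the colimit description of $\mathrm{Hom}$-spaces in $\mathcal{A}/\mathcal{C}$ and then feed in Lemmas 4.1, 4.2 and 4.3. Since $\bar{f}$ and $\bar{g}$ are represented by morphisms in the direct systems, write $\bar{f}=Tf$ with $f\colon M'\to N/N'$ a morphism of $\mathcal{A}$, where $M'\subseteq M$, $N'\subseteq N$, $M/M'\in\mathcal{C}$ and $N'\in\mathcal{C}$; likewise $\bar{g}=Tg$ with $g\colon X'\to Y/Y'$, where $X'\subseteq X$, $Y'\subseteq Y$, $X/X'\in\mathcal{C}$ and $Y'\in\mathcal{C}$. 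Then $f\otimes g\colon M'\otimes X'\to (N/N')\otimes(Y/Y')$ is a genuine morphism of $\mathcal{A}$, and the task is to exhibit it as a representative of an element of
$$
\mathrm{Hom}_{\mathcal{A}/\mathcal{C}}(M\otimes X,\ N\otimes Y)=\varinjlim_{P,Q}\mathrm{Hom}_{\mathcal{A}}\bigl(P,\ (N\otimes Y)/Q\bigr),
$$
the colimit running over subobjects $P\subseteq M\otimes X$ with $(M\otimes X)/P\in\mathcal{C}$ and subobjects $Q\subseteq N\otimes Y$ with $Q\in\mathcal{C}$. So I must produce an admissible pair $(P,Q)$ together with the corresponding morphism.

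For the source, let $i_1\colon M'\to M$ and $i_2\colon X'\to X$ be the inclusions. By Lemma 4.2 the composite $i_1\otimes i_2$ is a monomorphism, so $M'\otimes X'$ is a subobject of $M\otimes X$ and $(M\otimes X)/(M'\otimes X')=\mathrm{coker}(i_1\otimes i_2)$. Since $\mathrm{coker}\ i_1=M/M'\in\mathcal{C}$ and $\mathrm{coker}\ i_2=X/X'\in\mathcal{C}$, the cokernel half of Lemma 4.1 (applied to $f=i_1$, $g=i_2$) gives $(M\otimes X)/(M'\otimes X')\in\mathcal{C}$. Hence $P:=M'\otimes X'$ is an admissible source.

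For the target, let $p_1\colon N\to N/N'$ and $p_2\colon Y\to Y/Y'$ be the projections. By Lemma 4.3 the composite $p_1\otimes p_2$ is an epimorphism and, as noted just before the statement, induces an isomorphism $(N/N')\otimes(Y/Y')\cong (N\otimes Y)/\mathrm{ker}(p_1\otimes p_2)$. Since $\mathrm{ker}\ p_1=N'\in\mathcal{C}$ and $\mathrm{ker}\ p_2=Y'\in\mathcal{C}$, the kernel half of Lemma 4.1 (applied to $f=p_1$, $g=p_2$) gives $Q:=\mathrm{ker}(p_1\otimes p_2)\in\mathcal{C}$, so $Q$ is an admissible subobject of $N\otimes Y$. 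Post-composing $f\otimes g$ with that isomorphism then yields an element of $\mathrm{Hom}_{\mathcal{A}}\bigl(P,\ (N\otimes Y)/Q\bigr)$ indexed by the admissible pair $(P,Q)$, and its image under the canonical map into the colimit is by definition $T(f\otimes g)$. This proves $T(f\otimes g)\in\mathrm{Hom}_{\mathcal{A}/\mathcal{C}}(M\otimes X,N\otimes Y)$.

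There is no serious obstacle here; the content is carried entirely by the three preceding lemmas, and the remaining work is essentially bookkeeping. The two points that need a little care are to invoke Lemma 4.1 in both of its forms---the cokernel form for $i_1\otimes i_2$ and the kernel form for $p_1\otimes p_2$---and to match the resulting subobject/quotient-object data precisely with the index set of the colimit defining $\mathrm{Hom}_{\mathcal{A}/\mathcal{C}}(M\otimes X,N\otimes Y)$. I would also flag that what is proved is that any choice of representatives $f,g$ yields a well-defined morphism $T(f\otimes g)$; independence of this morphism from the choice of $f$ and $g$---which is what is actually needed in order to define $\bar{f}\otimes\bar{g}$ on morphisms---is a separate verification carried out afterwards.
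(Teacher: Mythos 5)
Your proposal is correct and follows essentially the same route as the paper: exhibit $M'\otimes X'$ as an admissible source via the cokernel half of Lemma 4.1 applied to $i_1\otimes i_2$, and $\mathrm{ker}(p_1\otimes p_2)$ as an admissible kernel via the kernel half, using the isomorphism $(N/N')\otimes(Y/Y')\cong (N\otimes Y)/\mathrm{ker}(p_1\otimes p_2)$. The only cosmetic difference is that you read off $\mathrm{coker}\,i_1=M/M'\in\mathcal{C}$ directly, where the paper deduces it from $Ti_1$ being an isomorphism; your closing remark that well-definedness is a separate verification matches the paper's Proposition 4.5.
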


\begin{proof}
Suppose $f:M'\to N/N'$, $g:X' \to Y/Y'$ with $M/M'$, $N'$, $X/X'$, $Y'$ are in $\mathcal{C}$. It follows that
$$
f \otimes g: M'\otimes X' \to N/N'\otimes Y/Y'.
$$
Denote $i_1:M'\to M$, $i_2:X'\to X$ to be inclusions. Since $Ti_1$, $Ti_2$ are isomorphisms in $\mathcal{A}/\mathcal{C}$, we know that $\mathrm{coker}\ i_1$, $\mathrm{coker}\ i_2$ belong to $\mathcal{C}$. By Lemma 4.1, $\mathrm{coker}(i_1\otimes i_2)\in \mathcal{C}$. This means $(M\otimes X)/ (M'\otimes X') \in \mathcal{C}$.

Denote $p_1:N\to N/N'$, $p_2:Y\to Y/Y'$. We have already known that $N/N' \otimes Y/Y' \cong N\otimes Y/\mathrm{ker}(p_1 \otimes p_2)$. Since $Tp_1$, $Tp_2$ are isomorphisms in $\mathcal{A}/\mathcal{C}$, we know that $\mathrm{ker}\ p_1$, $\mathrm{ker}\ p_2$ are  in $\mathcal{C}$. By Lemma 4.1, it follows that $\mathrm{ker}(p_1\otimes p_2) \in \mathcal{C}$.

In summary, $f \otimes g$ is in the direct system. Therefore, $T(f\otimes g) \in \mathrm{Hom}_{\mathcal{A}/\mathcal{C}}(M\otimes X, N\otimes Y)$.
\end{proof}

\begin{proposition}
Let $\mathcal{A}$ be an abelian monoidal category with biexact tensor product, $\mathcal{C}$ be a two-sided Serre tensor-ideal of $\mathcal{A}$. Let $\bar{f}:M\to N$, $\bar{g}:X \to Y$ be two morphisms in $\mathcal{A}/\mathcal{C}$. Suppose $\bar{f}=Tf=Tf_1$ and $\bar{g}=Tg=Tg_1$, then $T(f\otimes g)= T(f_1\otimes g_1)$.
\end{proposition}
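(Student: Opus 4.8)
The plan is to exploit that each Hom-space of $\mathcal{A}/\mathcal{C}$ is a filtered colimit: the hypothesis $Tf=Tf_1$ means exactly that $f$ and $f_1$ become the \emph{same} morphism of $\mathcal{A}$ once their sources are restricted to a common subobject and their targets projected onto a common quotient, and likewise for $g,g_1$. One then checks that $-\otimes-$ is compatible with these restrictions and projections, so that $f\otimes g$ and $f_1\otimes g_1$ likewise agree after restriction and projection, hence represent the same class in $\mathrm{Hom}_{\mathcal{A}/\mathcal{C}}(M\otimes X,N\otimes Y)$.

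First I would fix representatives $f\colon M'\to N/N'$, $f_1\colon M_1'\to N/N_1'$, $g\colon X'\to Y/Y'$, $g_1\colon X_1'\to Y/Y_1'$ (with the usual membership conditions in $\mathcal{C}$). Since the index systems defining these Hom-spaces are directed (as noted after Definition 2.3), there are subobjects $M''\subseteq M'\cap M_1'$ and $N''\supseteq N'+N_1'$ with $M/M'',N''\in\mathcal{C}$ such that the common restriction–projection $f_2:=\bar{\pi}\circ f\circ\iota=\bar{\pi}_1\circ f_1\circ\iota_1\colon M''\to N/N''$ is well defined, where $\iota,\iota_1$ are the inclusions and $\bar{\pi},\bar{\pi}_1$ the canonical projections; similarly one obtains $g_2:=\bar{\sigma}\circ g\circ\jmath=\bar{\sigma}_1\circ g_1\circ\jmath_1\colon X''\to Y/Y''$ with $X/X'',Y''\in\mathcal{C}$.

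The heart of the matter is the assertion, to be applied twice: \emph{if $f_2=\bar{\pi}\circ f\circ\iota$ and $g_2=\bar{\sigma}\circ g\circ\jmath$ are such restriction–projections, then $T(f_2\otimes g_2)=T(f\otimes g)$ in $\mathrm{Hom}_{\mathcal{A}/\mathcal{C}}(M\otimes X,N\otimes Y)$.} By the interchange law for the bifunctor $\otimes$,
$$
f_2\otimes g_2=(\bar{\pi}\otimes\bar{\sigma})\circ(f\otimes g)\circ(\iota\otimes\jmath).
$$
By Lemma 4.2, $M''\otimes X''$ and $M'\otimes X'$ are subobjects of $M\otimes X$ and $\iota\otimes\jmath$ is the inclusion of the former into the latter, with cokernel in $\mathcal{C}$ by Lemma 4.1 (since $\mathrm{coker}\,\iota=M'/M''$ and $\mathrm{coker}\,\jmath=X'/X''$ are subobjects of $M/M''$ and $X/X''$, hence lie in $\mathcal{C}$). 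By Lemma 4.3 and Proposition 4.4 one has $N/N'\otimes Y/Y'\cong(N\otimes Y)/\mathrm{ker}(p_1\otimes p_2)$ and $N/N''\otimes Y/Y''\cong(N\otimes Y)/\mathrm{ker}(p_1''\otimes p_2'')$, with both kernels in $\mathcal{C}$; since $p_1''\otimes p_2''=(\bar{\pi}\otimes\bar{\sigma})\circ(p_1\otimes p_2)$, the first kernel sits inside the second and $\bar{\pi}\otimes\bar{\sigma}$ is precisely the induced projection between these two quotients of $N\otimes Y$. Hence the displayed identity exhibits $f_2\otimes g_2$ as the image of $f\otimes g$ under the transition map of the directed system computing $\mathrm{Hom}_{\mathcal{A}/\mathcal{C}}(M\otimes X,N\otimes Y)$, so the two have the same class there.

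Applying the assertion to $(f,g)$ and to $(f_1,g_1)$ gives $T(f\otimes g)=T(f_2\otimes g_2)=T(f_1\otimes g_1)$, as wanted. The main obstacle I anticipate is the bookkeeping in the core step: checking that, under the identifications of Lemmas 4.2 and 4.3, $\iota\otimes\jmath$ and $\bar{\pi}\otimes\bar{\sigma}$ are genuinely the structure maps of the relevant directed system — in particular the inclusion $\mathrm{ker}(p_1\otimes p_2)\subseteq\mathrm{ker}(p_1''\otimes p_2'')$ and the compatibility of the canonical projections with $\bar{\pi}\otimes\bar{\sigma}$. All the ancillary kernels and cokernels lie in $\mathcal{C}$ by Lemma 4.1 together with the closure of $\mathcal{C}$ under subobjects and quotients, so no further work is needed there.
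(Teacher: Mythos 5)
Your argument is correct, and it rests on the same two pillars as the paper's: directedness of the index system (so that $Tf=Tf_1$ yields a common refinement $f_2$, and likewise $g_2$), and the fact that tensoring the structure maps of the system for $\Hom_{\mathcal{A}/\mathcal{C}}(M,N)$ and $\Hom_{\mathcal{A}/\mathcal{C}}(X,Y)$ produces structure maps of the system for $\Hom_{\mathcal{A}/\mathcal{C}}(M\otimes X,N\otimes Y)$. The organizational difference is that the paper first factors $f\otimes g=(f\otimes \id)\circ(\id\otimes g)$ and proves well-definedness one variable at a time; with one factor equal to the identity, the identification of structure maps is immediate ($\id\otimes i^{X_2'}_{X'}=i^{M\otimes X_2'}_{M\otimes X'}$, etc.), and the two half-statements are then glued by functoriality of $T$. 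You instead handle both variables simultaneously via the interchange law $f_2\otimes g_2=(\bar{\pi}\otimes\bar{\sigma})\circ(f\otimes g)\circ(\iota\otimes\jmath)$, which is more direct but shifts the burden onto exactly the bookkeeping you flag at the end: that $\iota\otimes\jmath$ is the inclusion $M''\otimes X''\hookrightarrow M'\otimes X'$ inside $M\otimes X$, that $\ker(p_1\otimes p_2)\subseteq\ker(p_1''\otimes p_2'')$ because $p_1''\otimes p_2''=(\bar{\pi}\otimes\bar{\sigma})\circ(p_1\otimes p_2)$, and that all the new kernels and cokernels lie in $\mathcal{C}$ (Lemma 4.1 plus closure under subobjects). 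Those checks all go through, so your proof is complete; the paper's route simply trades this verification for one application of $T$'s functoriality on composites.
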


\begin{proof}
First of all, we claim that
$$
T(id\otimes g) =  T(id\otimes g_1). 
$$
Suppose $g:X' \to Y/Y'$, $g_1:X_1' \to Y/Y_1'$. Since the direct system in the definition of quotient categroty is directed, we can obtain a morphism $g_2:X_2'\to Y/Y_2'$ such that the following diagrams commute:
$$
\begin{tikzcd}
X_2' \arrow[r,"i^{X_2'}_{X'}"] \arrow[d,"g_2"] & X' \arrow[d,"g"] \\
Y/Y_2' & Y/Y' \arrow[l,"p^{Y/Y'}_{Y/Y_2'}"] \\
\end{tikzcd}
$$
and
$$
\begin{tikzcd}
X_2' \arrow[r,"i^{X_2'}_{X_1'}"] \arrow[d,"g_2"] & X_1' \arrow[d,"g_1"] \\
Y/Y_2' & Y/Y_1' \arrow[l,"p^{Y/Y_1'}_{Y/Y_2'}"]. \\
\end{tikzcd}
$$
Thus, we obtain two commutative diagrams:
$$
\begin{tikzcd}
M\otimes X_2' \arrow[r,"id\otimes i^{X_2'}_{X'}"] \arrow[d,"id\otimes g_2"] & M\otimes X' \arrow[d,"id\otimes g"] \\
M\otimes Y/Y_2' & M\otimes Y/Y' \arrow[l,"id\otimes p^{Y/Y'}_{Y/Y_2'}"] \\
\end{tikzcd}
$$
and
$$
\begin{tikzcd}
M\otimes X_2' \arrow[r,"id\otimes i^{X_2'}_{X_1'}"] \arrow[d,"id\otimes g_2"] & M\otimes X_1' \arrow[d,"id\otimes g_1"] \\
M\otimes Y/Y_2' & M\otimes Y/Y_1' \arrow[l,"id\otimes p^{Y/Y_1'}_{Y/Y_2'}"]. \\
\end{tikzcd}
$$
Because $id\otimes i^{X_2'}_{X'}= i^{M\otimes X_2'}_{M\otimes X'}$, $id\otimes i^{X_2'}_{X_1'}= i^{M\otimes X_2'}_{M\otimes X_1'}$, $id\otimes p^{Y/Y'}_{Y/Y_2'} =p^{M \otimes Y/Y'}_{M \otimes Y/Y_2'}$ and $id\otimes p^{Y/Y_1'}_{Y/Y_2'} =p^{M \otimes Y/Y_1'}_{M \otimes Y/Y_2'}$, we obtain two commutative diagrams:
$$
\begin{tikzcd}
M\otimes X_2' \arrow[r,"i^{M\otimes X_2'}_{M\otimes X'}"] \arrow[d,"id\otimes g_2"] & M\otimes X' \arrow[d,"id\otimes g"] \\
M\otimes Y/Y_2' & M\otimes Y/Y' \arrow[l,"p^{M \otimes Y/Y'}_{M \otimes Y/Y_2'}"] \\
\end{tikzcd}
$$
and
$$
\begin{tikzcd}
M\otimes X_2' \arrow[r,"i^{M\otimes X_2'}_{M\otimes X_1'}"] \arrow[d,"id\otimes g_2"] & M\otimes X_1' \arrow[d,"id\otimes g_1"] \\
M\otimes Y/Y_2' & M\otimes Y/Y_1' \arrow[l,"p^{M \otimes Y/Y_1'}_{M \otimes Y/Y_2'}"]. \\
\end{tikzcd}
$$
This means 
$$
T(id\otimes g)= T(id\otimes g_2) = T(id\otimes g_1).
$$
The claim has been proven, and similarly one can obtain that
$$
T(f\otimes id) =  T(f_1\otimes id). 
$$
Therefore,
$$
\begin{aligned}
T(f\otimes g) &= T((f\otimes id)\circ (id \otimes g))= T(f\otimes id)\circ T(id \otimes g)\\
&= T(f_1 \otimes id)\circ T(id \otimes g_1) = T((f_1 \otimes id)\circ (id \otimes g_1))\\ 
&= T(f_1\otimes g_1).
\end{aligned}
$$
\end{proof}

Hence, we can define the tensor product of two morphisms in the quotient category. The above propositions guarantee that the following definition is well-defined.

\begin{definition}
Let $\mathcal{A}$ be an abelian monoidal category with biexact tensor product, $\mathcal{C}$ be a two-sided Serre tensor-ideal of $\mathcal{A}$. Define the tensor product of objects in $\mathcal{A}/\mathcal{C}$ by the same tensor product in $\mathcal{A}$, and define the tensor product of morphisms in $\mathcal{A}/\mathcal{C}$ by
$$
\bar{f} \otimes \bar{g}:=T(f\otimes g)
$$
where $f,g$ are in direct systems, and $T$ is the canonical functor. It is clear from the definition that 
$$
Tf \otimes Tg = T(f\otimes g).
$$
Additionally, we define the associativity constraint in $\mathcal{A}/\mathcal{C}$ by
$$
\bar{a}_{X,Y,Z}=Ta_{X,Y,Z}.
$$
By the way, it is also clear that the left and right unit isomorphisms in $\mathcal{A}/\mathcal{C}$ are $Tl_X$ and $Tr_X$ where $l_X$ and $r_X$ are the left and right unit isomorphisms in $\mathcal{A}$.
\end{definition}

Let's consider an example of two-sided Serre tensor-ideal.
\begin{example}
Let $I$ be a finite indexed set, $\mathcal{A}= \bigoplus\limits_{i\in I} \mathrm{Mat}_{n_i}(\mathrm{Vec})$, where $\mathrm{Mat}_{n_i}(\mathrm{Vec})$ is the category whose objects are $n_i$-by-$n_i$ matrices of finite dimensional vector spaces. The tensor product of two objects from distinct direct summands is defined to be zero. One can observe that $\mathcal{A}$ is a multitensor category, and each $\mathrm{Mat}_{n_i}(\mathrm{Vec})$ is a two-sided Serre tensor-ideal of $\mathcal{A}$. In fact, let $\mathcal{C} = \mathrm{Mat}_{n_i}(\mathrm{Vec})$ to be such a two-sided Serre tensor-ideal, then $\mathcal{A}/\mathcal{C} = \bigoplus\limits_{j\in I \backslash \{i \} } \mathrm{Mat}_{n_i}(\mathrm{Vec})$ 
\end{example}

Next, we show that, with the above tensor product, the quotient category of a monoidal category is also a monoidal category.

\begin{proposition}
Let $\mathcal{A}$ be an abelian monoidal category with biexact tensor product, $\mathcal{C}$ be a two-sided Serre tensor-ideal of $\mathcal{A}$, then $\mathcal{A}/\mathcal{C}$ is a monoidal category. 
\end{proposition}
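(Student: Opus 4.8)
The plan is to verify the monoidal category axioms for $\mathcal{A}/\mathcal{C}$ directly, transporting the structure along the canonical functor $T$. Since $T$ is the identity on objects and the tensor product of objects in $\mathcal{A}/\mathcal{C}$ is defined to be the one in $\mathcal{A}$, the bifunctoriality and coherence data are all of the form $Tf$ for structure morphisms $f$ in $\mathcal{A}$; the point is that the defining identities in $\mathcal{A}$ push forward through the functor $T$.

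First I would check that $\otimes: \mathcal{A}/\mathcal{C} \times \mathcal{A}/\mathcal{C} \to \mathcal{A}/\mathcal{C}$ is a well-defined bifunctor. On objects this is immediate; on morphisms, Proposition 4.4 guarantees $T(f\otimes g)$ lands in the right Hom-space and Proposition 4.5 guarantees independence of the chosen representatives $f,g$ in the direct systems, so $\bar{f}\otimes\bar{g}$ is well-defined. Functoriality then follows by a short computation: $\mathrm{id}_X \otimes \mathrm{id}_Y = T(\mathrm{id}_X) \otimes T(\mathrm{id}_Y) = T(\mathrm{id}_X \otimes \mathrm{id}_Y) = T(\mathrm{id}_{X\otimes Y}) = \mathrm{id}_{X\otimes Y}$, and for composability one chooses compatible representatives (using directedness of the direct system, exactly as in the proof of Proposition 4.5) so that $(\bar{f}'\circ\bar{f})\otimes(\bar{g}'\circ\bar{g}) = T((f'\circ f)\otimes(g'\circ g)) = T((f'\otimes g')\circ(f\otimes g)) = (\bar{f}'\otimes\bar{g}')\circ(\bar{f}\otimes\bar{g})$, where the middle step is the interchange law in $\mathcal{A}$ and the fact that $T$ is a functor.

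Next I would verify that $\bar{a}_{X,Y,Z} = Ta_{X,Y,Z}$, $Tl_X$, $Tr_X$ are natural isomorphisms. They are isomorphisms because $T$ preserves isomorphisms (it is exact, so $Ta_{X,Y,Z}$ has inverse $T(a_{X,Y,Z}^{-1})$). Naturality of $\bar{a}$ in $\mathcal{A}/\mathcal{C}$ reduces to applying $T$ to the naturality square of $a$ in $\mathcal{A}$ after choosing representatives, using $Tf\otimes Tg = T(f\otimes g)$ to identify $T$ of a tensor of morphisms with the tensor of their images; similarly for $l$ and $r$. Finally, the pentagon axiom and the triangle axiom in $\mathcal{A}/\mathcal{C}$ are obtained by applying the functor $T$ to the pentagon and triangle diagrams in $\mathcal{A}$: every arrow in those diagrams is either a structure isomorphism (becoming its $T$-image) or a morphism of the form $\mathrm{id}\otimes a$, $a\otimes\mathrm{id}$, etc., which by the definition becomes $\mathrm{id}\otimes \bar{a} = T(\mathrm{id}\otimes a)$ and so on; commutativity in $\mathcal{A}$ together with functoriality of $T$ yields commutativity in $\mathcal{A}/\mathcal{C}$.

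The main obstacle, modest as it is, is purely bookkeeping: because morphisms in $\mathcal{A}/\mathcal{C}$ are equivalence classes and $\otimes$ on morphisms is defined through representatives, every identity to be checked must first be reduced to an identity among honest morphisms in $\mathcal{A}$ by selecting a single pair of representatives that simultaneously works for all the morphisms occurring in that identity. This is always possible by the directedness of the relevant direct systems, exactly the device already used in Proposition 4.5; once compatible representatives are fixed, the coherence identities are literally the images under $T$ of the corresponding identities in $\mathcal{A}$, and there is nothing further to prove. I would state this reduction once as a remark and then treat the pentagon, triangle, and naturality verifications uniformly.
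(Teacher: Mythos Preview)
Your proposal is correct and follows essentially the same approach as the paper: transport the monoidal structure along $T$ and observe that the coherence diagrams in $\mathcal{A}/\mathcal{C}$ are simply the $T$-images of those in $\mathcal{A}$, using $Tf\otimes Tg = T(f\otimes g)$. The paper's proof is terser---it writes out only the pentagon and triangle verifications explicitly---whereas you also spell out bifunctoriality and naturality of $\bar a,\,Tl,\,Tr$; but the underlying idea is identical.
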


\begin{proof}
Consider the following pentagon axiom diagram of $\mathcal{A}$:
$$
\xymatrix{
& ((W\otimes X)\otimes Y)\otimes Z \ar[ld]_{a_{W,X,Y}\otimes id_Z} \ar[rd]^{a_{W\otimes X,Y,Z}} & \\
(W\otimes (X\otimes Y))\otimes Z \ar[d]^{a_{W,X\otimes Y,Z}} & & (W\otimes X)\otimes (Y\otimes Z) \ar[d]_{a_{W,X,Y\otimes Z}} \\
W\otimes((X\otimes Y)\otimes Z) \ar[rr]^{id_W\otimes a_{X,Y,Z}} & & W\otimes (X\otimes (Y\otimes Z))
}
$$
Applying the canonical functor $T$ gives that
$$
\xymatrix{
& ((W\otimes X)\otimes Y)\otimes Z \ar[ld]_{\bar{a}_{W,X,Y}\otimes id_Z} \ar[rd]^{\bar{a}_{W\otimes X,Y,Z}} & \\
(W\otimes (X\otimes Y))\otimes Z \ar[d]^{\bar{a}_{W,X\otimes Y,Z}} & & (W\otimes X)\otimes (Y\otimes Z) \ar[d]_{\bar{a}_{W,X,Y\otimes Z}} \\
W\otimes((X\otimes Y)\otimes Z) \ar[rr]^{id_W\otimes \bar{a}_{X,Y,Z}} & & W\otimes (X\otimes (Y\otimes Z)).
}
$$
This is actually the pentagon axiom of $\mathcal{A}/\mathcal{C}$.

Consider the following triangle axiom diagram of $\mathcal{A}$:
$$
\xymatrix{
(X\otimes 1) \otimes Y \ar[rr]^{a_{X,1,Y}} \ar[dr]_{r_X\otimes id_Y} & & X\otimes (1\otimes Y) \ar[dl]^{id_X\otimes l_Y} \\
& X\otimes Y & 
}
$$
Applying the canonical functor $T$ gives that
$$
\xymatrix{
(X\otimes 1) \otimes Y \ar[rr]^{\bar{a}_{X,1,Y}} \ar[dr]_{Tr_X\otimes id_Y} & & X\otimes (1\otimes Y) \ar[dl]^{id_X\otimes Tl_Y} \\
& X\otimes Y & .
}
$$
Thus, $\mathcal{A}/\mathcal{C}$ is a monoidal category.
\end{proof}

Recall that $Tf \otimes Tg = T(f\otimes g)$, this implies that $T$ is a monoidal functor.

\subsection{Localization of a multiring category}

Now, we study the quotient category of a multiring category (resp. a multitensor category) by a two-sided Serre tensor-ideal. 

\begin{proposition}
Let $\mathcal{A}$ be a multiring category, $\mathcal{C}$ be a two-sided Serre tensor-ideal of $\mathcal{A}$. Then $\mathcal{A}/\mathcal{C}$ is a multiring category. 
\end{proposition}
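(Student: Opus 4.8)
The plan is to verify, one by one, the four conditions in the definition of a multiring category for $\mathcal{A}/\mathcal{C}$: that it is locally finite $k$-linear abelian, that it is monoidal, that its tensor product is bilinear on morphisms, and that its tensor product is biexact. Two of these are already in hand. Since a multiring category is in particular locally finite $k$-linear abelian, Proposition 3.4 shows $\mathcal{A}/\mathcal{C}$ is locally finite $k$-linear abelian, and Proposition 4.8 shows $\mathcal{A}/\mathcal{C}$ is monoidal for the tensor product of Definition 4.6. Bilinearity of $\otimes$ on morphisms is routine: given $\bar f_1,\bar f_2\colon M\to N$ and $\bar g\colon X\to Y$ in $\mathcal{A}/\mathcal{C}$, use that the colimit defining the Hom-spaces is directed to pick representatives $f_1,f_2$ sharing a source and a target in the direct system, and $g$ likewise; then $\bar f_1+\bar f_2=T(f_1+f_2)$ since $T$ is additive, so $(\bar f_1+\bar f_2)\otimes\bar g=T\big((f_1+f_2)\otimes g\big)=T(f_1\otimes g)+T(f_2\otimes g)=\bar f_1\otimes\bar g+\bar f_2\otimes\bar g$ by bilinearity of $\otimes$ in $\mathcal{A}$ together with additivity of $T$; scalars and the second variable are handled identically.

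The substantive content is biexactness: for every object $Z$, the functors $Z\otimes-$ and $-\otimes Z$ on $\mathcal{A}/\mathcal{C}$ are exact. The key point is that, because the objects of $\mathcal{A}/\mathcal{C}$ are literally the objects of $\mathcal{A}$ and $T(\mathrm{id}_Z\otimes f)=\mathrm{id}_Z\otimes Tf$ by Definition 4.6, one has an \emph{equality} of functors $T\circ(Z\otimes-)_{\mathcal{A}}=(Z\otimes-)_{\mathcal{A}/\mathcal{C}}\circ T\colon\mathcal{A}\to\mathcal{A}/\mathcal{C}$, and symmetrically for $-\otimes Z$. Since $T$ is exact and $(Z\otimes-)_{\mathcal{A}}$ is exact (as $\mathcal{A}$ is a multiring category), the composite $T\circ(Z\otimes-)_{\mathcal{A}}$ sends short exact sequences of $\mathcal{A}$ to short exact sequences of $\mathcal{A}/\mathcal{C}$. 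Hence it suffices to prove that every short exact sequence of $\mathcal{A}/\mathcal{C}$ is isomorphic to the image under $T$ of a short exact sequence of $\mathcal{A}$: granting this, one lifts a short exact sequence of $\mathcal{A}/\mathcal{C}$ to $\mathcal{A}$, applies the exact functor $(Z\otimes-)_{\mathcal{A}}$ there, then applies $T$, and transports back along the isomorphism.

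Establishing this lifting statement is the main obstacle, and I would do it with the tools already developed. Given $0\to\bar A\xrightarrow{\bar\iota}\bar B\xrightarrow{\bar\pi}\bar C\to 0$ exact in $\mathcal{A}/\mathcal{C}$, write $\bar\pi=Tp$ for a representative $p\colon B'\to C/C'$ in the direct system with $B/B'\in\mathcal{C}$ and $C'\in\mathcal{C}$, where $B$, $C$ denote $\bar B$, $\bar C$ regarded as objects of $\mathcal{A}$. Put $K=\ker p$ and take the short exact sequence $0\to K\to B'\to B'/K\to 0$ in $\mathcal{A}$. Applying $T$ and invoking Lemma 2.4: the inclusion $B'\hookrightarrow B$ has cokernel $B/B'\in\mathcal{C}$, so $T$ sends it to an isomorphism; the quotient $C\twoheadrightarrow C/C'$ has kernel $C'\in\mathcal{C}$, so $T$ sends it to an isomorphism; and $\operatorname{coker} p\in\mathcal{C}$ because $\bar\pi=Tp$ is an epimorphism, so $T$ sends the canonical monomorphism $B'/K\hookrightarrow C/C'$ (which has zero kernel and cokernel $\operatorname{coker} p$) to an isomorphism. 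Chasing these identifications, and using uniqueness of kernels in the abelian category $\mathcal{A}/\mathcal{C}$ to match the term $\bar A$ with $TK$, exhibits $T(0\to K\to B'\to B'/K\to 0)$ as isomorphic to $0\to\bar A\to\bar B\to\bar C\to 0$. This completes the lifting statement, hence the proof that $Z\otimes-$ is exact; $-\otimes Z$ is symmetric, and assembling all four verified conditions shows $\mathcal{A}/\mathcal{C}$ is a multiring category.
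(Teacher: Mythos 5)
Your proposal is correct, and the first three conditions (locally finite $k$-linear abelian, monoidal, bilinear on morphisms) are handled exactly as in the paper, via Propositions 3.4 and 4.8 and the additivity/linearity of $T$. For biexactness, however, you take a genuinely different route. The paper argues pointwise with Gabriel's computation of kernels and images: from an exact sequence $L\xrightarrow{\bar f}M\xrightarrow{\bar g}N$ in $\mathcal{A}/\mathcal{C}$ it extracts the single statement $\ker g/\operatorname{Im} f\in\mathcal{C}$, tensors that object with $A$ inside $\mathcal{A}$ (using that $\mathcal{C}$ is a tensor-ideal and that $A\otimes-$ commutes with $\ker$ and $\operatorname{Im}$ by exactness in $\mathcal{A}$), and then translates $\ker(\operatorname{id}_A\otimes g)/\operatorname{Im}(\operatorname{id}_A\otimes f)\in\mathcal{C}$ back into exactness in the quotient; it never lifts a sequence. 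You instead prove a structural lifting lemma --- every short exact sequence of $\mathcal{A}/\mathcal{C}$ is isomorphic to the image under $T$ of a short exact sequence of $\mathcal{A}$ --- and combine it with the on-the-nose functor identity $T\circ(Z\otimes-)_{\mathcal{A}}=(Z\otimes-)_{\mathcal{A}/\mathcal{C}}\circ T$, which indeed follows from $T(\operatorname{id}\otimes f)=\operatorname{id}\otimes Tf$ and the fact that $T$ is the identity on objects. Both arguments rest on the same two pillars (exactness of $T$ and compatibility of $T$ with $\otimes$), but yours additionally needs the standard reduction from exactness to preservation of short exact sequences (legitimate once additivity of $Z\otimes-$ on $\mathcal{A}/\mathcal{C}$ is known) and a careful chase through the identifications $TB'\cong TB$, $T(C/C')\cong TC$, $T(B'/K)\cong TC$; your sketch of that chase, including the use of Lemma 2.4 to see that $\operatorname{coker}p\in\mathcal{C}$ and the kernel-uniqueness argument matching $\bar A$ with $TK$, is sound. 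What your approach buys is a reusable and arguably more transparent fact about the quotient construction; what the paper's buys is a shorter computation that works directly with arbitrary three-term exact sequences.
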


\begin{proof}
By Proposition 3.4 and Proposition 4.8, the quotient category of a locally finite $k$-linear abelian monoidal category is a locally finite $k$-linear abelian monoidal category. Therefore, it suffices to show that the tensor product $\otimes: \mathcal{A}/\mathcal{C} \times \mathcal{A}/\mathcal{C} \to \mathcal{A}/\mathcal{C}$ is bilinear and biexact.

Firstly, we show that the tensor product is bilinear. Note that the canonical functor $T$ is linear, thus 
$$
\begin{aligned}
&(\bar{f_1}+\bar{f_2}) \otimes \bar{g} 
= (Tf_1 +Tf_2) \otimes Tg 
= T(f_1+f_2)\otimes Tg\\ 
= &T((f_1+f_2)\otimes g)
= T(f_1\otimes g+f_2\otimes g)
= T(f_1\otimes g)+T(f_2\otimes g)\\
= &T(f_1)\otimes Tg+T(f_2)\otimes Tg
= \bar{f_1}\otimes \bar{g}+\bar{f_2}\otimes \bar{g}\\
\end{aligned}
$$
and similarly
$$
\begin{aligned}
&\bar{f} \otimes (\bar{g_1}+\bar{g_2}) 
= Tf \otimes (Tg_1+Tg_2)
= Tf\otimes T(g_1+g_2)\\
= &T(f\otimes (g_1+g_2))
= T(f\otimes g_1+ f\otimes g_2)
= T(f\otimes g_1)+ T(f\otimes g_2)\\
= &Tf\otimes Tg_1+Tf\otimes Tg_2
= \bar{f}\otimes \bar{g_1} + \bar{f}\otimes \bar{g_2}.
\end{aligned}
$$
Furthermore, for any $a \in k$, we have
$$
\begin{aligned}
&a\bar{f}\otimes \bar{g} 
= aTf \otimes Tg
= T(af)\otimes Tg
= T(af\otimes g)\\
= &T(f\otimes ag)
= Tf\otimes T(ag)
= Tf\otimes aTg
= \bar{f}\otimes a\bar{g}.
\end{aligned}
$$
This implies $\otimes: \mathcal{A}/\mathcal{C} \times \mathcal{A}/\mathcal{C} \to \mathcal{A}/\mathcal{C}$ is bilinear on morphisms.

Secondly, we show that the tensor product is biexact. Consider the following exact sequence
$$
\xymatrix{
L \ar[r]^{\bar{f}} & M \ar[r]^{\bar{g}} & N .
}
$$
Because we can write $\bar{g}=Tg$, $\bar{f}=Tf$, the exact sequence means that
$$
\mathrm{ker}(Tg)=\mathrm{Im}(Tf).
$$
By Lemma 3 in \cite{gabriel1962categories}, it follows that
$$
T(\mathrm{ker}\ g)= T(\mathrm{Im} \ f).
$$
Because $T$ is an exact functor, the short exact sequence
$$
\begin{tikzcd}
0 \arrow[r] & \mathrm{Im} f \arrow[r, "i"] & \mathrm{ker}\ g \arrow[r, "\pi"] & \mathrm{ker}\ g/ \mathrm{Im} \ f \arrow[r] & 0
\end{tikzcd}
$$
implies that
$$
\begin{tikzcd}
0 \arrow[r] & T\mathrm{Im} f \arrow[r, "Ti"] & T\mathrm{ker}\ g \arrow[r, "T\pi"] & T(\mathrm{ker}\ g/ \mathrm{Im} \ f) \arrow[r] & 0
\end{tikzcd}
$$
is exact. Consequently,
$$
T(\mathrm{ker}(g)/\mathrm{Im}(f)) \cong T (\mathrm{ker}\ g)/T(\mathrm{Im}\ f) =0 \  \text{in} \ \mathcal{A}/\mathcal{C},
$$
which means $\mathrm{ker}(g)/\mathrm{Im}(f) \in \mathcal{C}$.

For any object $A$ in $\mathcal{A}$, since $A\otimes -$ is exact in $\mathcal{A}$, we have the following short exact sequence
$$
\begin{tikzcd}
0 \arrow[r] & A\otimes \mathrm{Im} f \arrow[r,"id_A\otimes i"] & A\otimes \mathrm{ker}\ g \arrow[r,"id_A\otimes \pi"] & A\otimes (\mathrm{ker}\ g / \mathrm{Im} f) \arrow[r] & 0.
\end{tikzcd}
$$
Note that there is a short exact sequence
$$
\begin{tikzcd}
0 \arrow[r] & A\otimes \mathrm{Im} f \arrow[r,"id_A\otimes i"] & A\otimes \mathrm{ker}\ g \arrow[r,"\pi'"] & (A\otimes \mathrm{ker}\ g)/(A\otimes \mathrm{Im} f) \arrow[r] & 0.
\end{tikzcd}
$$
Therefore,
$$
(A\otimes \mathrm{ker}\ g)/(A\otimes \mathrm{Im} f) \cong A\otimes (\mathrm{ker}\ g / \mathrm{Im} f) \in \mathcal{C}.
$$

Now, we claim that
$$
A\otimes \mathrm{ker}\ g = \mathrm{ker} (id_A \otimes \ g).
$$
Consider the following exact sequence
$$
\xymatrix{
0 \ar[r] & \mathrm{ker}\ g \ar[r] & M \ar[r]^g & N,
}
$$
applying $A\otimes -$ gives the following exact sequence
$$
\xymatrix{
0 \ar[r] & A\otimes \mathrm{ker}\ g \ar[r] & A\otimes M \ar[r]^{id_A\otimes g} & A\otimes N.
}
$$
This indicates that 
$$
A\otimes \mathrm{ker}\ g = \mathrm{ker} (id_A \otimes g).
$$

Besides, note that 
$$
A\otimes \mathrm{Im} f = \mathrm{Im} (id_A \otimes f).
$$
It follows that
$$
\mathrm{ker} (id_A \otimes g)/\mathrm{Im} (id_A \otimes f) = (A\otimes \mathrm{ker}\ g)/(A\otimes \mathrm{Im} f) \in \mathcal{C}.
$$
This implies that
$$
T(\mathrm{ker} (id_A \otimes g)/\mathrm{Im} (id_A \otimes f))=0
$$
i.e.
$$
T(\mathrm{ker} (id_A \otimes g))/ T(\mathrm{Im} (id_A \otimes f))=0
$$
i.e.
$$
T(\mathrm{ker} (id_A \otimes g))= T(\mathrm{Im} (id_A \otimes f))
$$
By Lemma 3 in \cite{gabriel1962categories}, this means
$$
\mathrm{ker} (T(id_A \otimes g))= \mathrm{Im} (T(id_A \otimes f))
$$
i.e.
$$
\mathrm{ker} (id_A \otimes Tg)= \mathrm{Im} (id_A \otimes Tf)
$$
i.e.
$$
\mathrm{ker} (id_A \otimes \bar{g})= \mathrm{Im} (id_A \otimes \bar{f}).
$$
This means $A\otimes -$ is exact in $\mathcal{A}/\mathcal{C}$. Similarly, one can show $- \otimes A$ is exact in $\mathcal{A}/\mathcal{C}$. As a result, the tensor product is biexact in $\mathcal{A}/\mathcal{C}$. Thus, $\mathcal{A}/\mathcal{C}$ is a multiring category.
\end{proof}

\begin{proposition}
Let $\mathcal{A}$ be a multitensor category, $\mathcal{C}$ be a two-sided Serre tensor-ideal of $\mathcal{A}$, then $\mathcal{A}/\mathcal{C}$ is a multitensor category. 
\end{proposition}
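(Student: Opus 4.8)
The plan is to reduce the statement to the single remaining axiom, namely rigidity. First note that a multitensor category is in particular a multiring category: tensoring with a left‑dualizable object has a right adjoint and tensoring with a right‑dualizable object has a left adjoint, so the tensor product of $\mathcal{A}$ is automatically biexact, and it is bilinear and $\mathcal{A}$ is locally finite $k$-linear abelian by hypothesis. Hence Proposition 4.10 applies and tells us that $\mathcal{A}/\mathcal{C}$ is a multiring category; in particular $\mathcal{A}/\mathcal{C}$ is a locally finite $k$-linear abelian monoidal category with bilinear (and biexact) tensor product. So it suffices to show that every object of $\mathcal{A}/\mathcal{C}$ admits a left dual and a right dual.

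Next I would transport the duality data along the canonical functor $T$. Fix an object $X$ of $\mathcal{A}/\mathcal{C}$, which is an object of $\mathcal{A}$. Since $\mathcal{A}$ is rigid, choose a left dual $X^{*}$ of $X$ in $\mathcal{A}$ together with morphisms $\operatorname{ev}_X\colon X^{*}\otimes X\to 1$ and $\operatorname{coev}_X\colon 1\to X\otimes X^{*}$ satisfying the two zig‑zag identities, written using the associativity constraint $a$ and the unit isomorphisms $l,r$ of $\mathcal{A}$. Apply $T$ to obtain $T\operatorname{ev}_X\colon X^{*}\otimes X\to 1$ and $T\operatorname{coev}_X\colon 1\to X\otimes X^{*}$ in $\mathcal{A}/\mathcal{C}$. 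By Definition 4.7 and Proposition 4.8 we have $Tf\otimes Tg=T(f\otimes g)$, $\bar{a}_{X,Y,Z}=Ta_{X,Y,Z}$, and the unit isomorphisms of $\mathcal{A}/\mathcal{C}$ are $Tl_X$ and $Tr_X$; moreover $T$ is a functor, hence preserves identities and composition. Therefore $T$ sends each zig‑zag identity for $(X^{*},\operatorname{ev}_X,\operatorname{coev}_X)$ in $\mathcal{A}$ to the corresponding identity for $(X^{*},T\operatorname{ev}_X,T\operatorname{coev}_X)$ in $\mathcal{A}/\mathcal{C}$. This exhibits $X^{*}$ as a left dual of $X$ in $\mathcal{A}/\mathcal{C}$.

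Running the identical argument with a right dual ${}^{*}X$ of $X$ in $\mathcal{A}$ and its evaluation and coevaluation morphisms shows that ${}^{*}X$ is a right dual of $X$ in $\mathcal{A}/\mathcal{C}$. Thus $\mathcal{A}/\mathcal{C}$ has left and right duals, so it is a locally finite $k$-linear abelian rigid monoidal category whose tensor product is bilinear on morphisms, i.e.\ a multitensor category. (If the unit‑object hypothesis $\operatorname{End}(1)=k$ and indecomposability for the ``tensor category'' case are wanted, those would require the separate analysis carried out elsewhere in the paper; here only the multitensor claim is at stake.)

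I do not expect a genuine obstacle: rigidity is a property detected by the existence of morphisms satisfying equational (zig‑zag) identities, and $T$, having been engineered in Section 4 to be a strict monoidal functor, automatically carries such solutions to solutions. The only real work is the bookkeeping of the associativity and unit constraints when one writes the zig‑zag identities out in full and checks, term by term, that $T$ commutes with every piece — a routine verification given $Tf\otimes Tg=T(f\otimes g)$, $\bar a=Ta$, $\bar l=Tl$, $\bar r=Tr$, and functoriality of $T$. The mild subtlety worth a sentence in the write‑up is the first reduction step, i.e.\ recording explicitly that a multitensor category is a multiring category so that Proposition 4.10 may be invoked.
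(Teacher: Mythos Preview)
Your proposal is correct and follows essentially the same route as the paper: invoke the multiring result (Proposition~4.9 in the paper's numbering, not 4.10; likewise the tensor-product definition is 4.6) to handle everything except rigidity, then transport the evaluation, coevaluation, and zig-zag identities along the monoidal functor $T$ using $Tf\otimes Tg=T(f\otimes g)$ and $\bar a=Ta$. The paper's proof is terser but makes the identical argument.
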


\begin{proof}
As proved in the above proposition, we know that $\mathcal{A}/\mathcal{C}$ is a locally finite $k$-linear abelian monoidal category with bilinear tensor product. Therefore, it suffices to show $\mathcal{A}/\mathcal{C}$ is rigid.

For any object $X$ in $\mathcal{A}/\mathcal{C}$, it has a left dual $X^*$, which means there exist an evaluation $ev_X: X^*\otimes X \to 1$ and a coevaluation $coev_X:1 \to X \otimes X^*$ such that the compositions
$$
\begin{tikzcd}
        X \arrow[rr,"coev_X\otimes id_X"] & & (X\otimes X^*) \otimes X  \arrow[r,"a_{X,X^*,X}"] & X\otimes (X^* \otimes X) \arrow[rr,"id_X \otimes ev_X"] & & X,
\end{tikzcd}
$$
$$
\begin{tikzcd}
        X^* \arrow[rr,"id_{X^*} \otimes coev_X"] & & X^* \otimes (X\otimes X^*)   \arrow[r,"a_{X^*,X,X^*}^{-1}"] & (X^* \otimes X) \otimes X^* \arrow[rr,"ev_X\otimes id_{X^*}"] & & X^*
\end{tikzcd}
$$
are the identity morphisms. Applying $T$ gives that the compositions
$$
\begin{tikzcd}
        X \arrow[rr,"Tcoev_X\otimes id_X"] & & (X\otimes X^*) \otimes X  \arrow[r,"\bar{a}_{X,X^*,X}"] & X\otimes (X^* \otimes X) \arrow[rr,"id_X \otimes Tev_X"] & & X,
\end{tikzcd}
$$
$$
\begin{tikzcd}
        X^* \arrow[rr,"id_{X^*} \otimes Tcoev_X"] & & X^* \otimes (X\otimes X^*)   \arrow[r,"\bar{a}_{X^*,X,X^*}^{-1}"] & (X^* \otimes X) \otimes X^* \arrow[rr,"Tev_X\otimes id_{X^*}"] & & X^*
\end{tikzcd}
$$
are the identity morphisms. As a result, every object in $\mathcal{A}/\mathcal{C}$ has a left dual. Similarly, one can show every object in $\mathcal{A}/\mathcal{C}$ has a right dual. Thus, $\mathcal{A}/\mathcal{C}$ is rigid. Consequently, $\mathcal{A}/\mathcal{C}$ is a multitensor category.
\end{proof}

\subsection{Two-sided Serre tensor-ideal}

The following proposition shows that a two-sided Serre tensor-ideal of a tensor category is always trivial.

\begin{proposition}
Let $\mathcal{A}$ be a tensor category, $\mathcal{C}$ be a two-sided Serre tensor-ideal of $\mathcal{A}$, then $\mathcal{C}$ is trivial.
\end{proposition}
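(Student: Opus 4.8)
The plan is to prove the dichotomy that either $\mathcal{C}=0$ or $\mathcal{C}=\mathcal{A}$; in other words, there is no proper nonzero two-sided Serre tensor-ideal. So I will assume $\mathcal{C}$ contains a nonzero object $Y$ and deduce $\mathcal{C}=\mathcal{A}$. The two structural inputs are: since $\mathcal{A}$ is a tensor category it has left duals and $\mathrm{End}_{\mathcal{A}}(1)=k$, so Theorem 4.3.8(1) applies and the unit object $1$ is simple; and $\mathcal{A}$ is rigid, so $Y$ admits a left dual $Y^{*}$ with an evaluation $ev_{Y}\colon Y^{*}\otimes Y\to 1$ and a coevaluation $coev_{Y}\colon 1\to Y\otimes Y^{*}$ satisfying the snake identities.

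The first step is to check that $ev_{Y}\neq 0$. Suppose $ev_{Y}=0$; then $id_{Y}\otimes ev_{Y}=0$ since $\otimes$ is bilinear on morphisms. But the snake identity says that the composite $Y\xrightarrow{coev_{Y}\otimes id_{Y}}(Y\otimes Y^{*})\otimes Y\xrightarrow{a_{Y,Y^{*},Y}}Y\otimes(Y^{*}\otimes Y)\xrightarrow{id_{Y}\otimes ev_{Y}}Y$, composed with the appropriate unit isomorphisms, equals $id_{Y}$, which is nonzero because $Y\neq 0$ in an abelian category — a contradiction. Hence $\operatorname{Im}(ev_{Y})$ is a nonzero subobject of $1$, and since $1$ is simple, $\operatorname{Im}(ev_{Y})=1$; that is, $ev_{Y}$ is an epimorphism.

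Now I finish the argument: since $Y\in\mathcal{C}$ and $\mathcal{C}$ is a two-sided Serre tensor-ideal, $Y^{*}\otimes Y\in\mathcal{C}$; since $\mathcal{C}$ is closed under quotients and $ev_{Y}\colon Y^{*}\otimes Y\twoheadrightarrow 1$ is an epimorphism, we get $1\in\mathcal{C}$. Finally, for an arbitrary object $X\in\mathcal{A}$ the right unit isomorphism gives $X\cong X\otimes 1$, and $X\otimes 1\in\mathcal{C}$ because $1\in\mathcal{C}$ and $\mathcal{C}$ absorbs the tensor product; since $\mathcal{C}$ is closed under isomorphism, $X\in\mathcal{C}$, whence $\mathcal{C}=\mathcal{A}$. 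This establishes that $\mathcal{C}$ is trivial. I do not anticipate a serious obstacle; the only point requiring care is the verification that a vanishing evaluation collapses the snake composite to zero, which uses only bilinearity (hence additivity) of $\otimes$ on morphisms together with the fact that $id_{Y}\neq 0$ when $Y\neq 0$.
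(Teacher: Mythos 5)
Your proposal is correct and follows essentially the same route as the paper: show $ev_Y\colon Y^*\otimes Y\to 1$ is a nonzero epimorphism onto the simple unit, conclude $1\in\mathcal{C}$, and then absorb every object. You merely spell out two steps the paper leaves implicit (that a vanishing evaluation would kill the snake composite, and that $X\cong X\otimes 1$ forces $\mathcal{C}=\mathcal{A}$), which is fine.
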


\begin{proof}
Suppose $\mathcal{C}$ is not zero, choose a non-zero object $B$ in $\mathcal{C}$. Because $\mathcal{C}$ is a two-sided Serre tensor-ideal, $B^* \otimes B \in \mathcal{C}$. 
Note that
$$
ev_B:B^* \otimes B \to 1
$$
is a non-zero epimorphism in $\mathcal{A}$, since $1$ is simple and the composition $(id_B\otimes ev_B) \circ a_{B,B^*,B}\circ (coev_B\otimes id_B)$ is identity. Therefore, $1 \in \mathcal{C}$. Consequently, $\mathcal{A} = \mathcal{C}$.
\end{proof}

This proposition means that $\mathcal{A}/\mathcal{C}$ could only be zero or $\mathcal{A}$ itself. 

However, someone may be interested in other definitions of tensor structures of $\mathcal{A}/\mathcal{C}$ to make $\mathcal{A}/\mathcal{C}$ be a monoidal category. Interestingly, we will prove next that no matter how we define the tensor structure, the canonical functor being monoidal implies the Serre subcategory is trivial.

\begin{proposition}
Let $\mathcal{A}$ be a tensor category, $\mathcal{C}$ be a Serre subcategory of $\mathcal{A}$. Suppose the canonical functor $T$ is a monoidal functor, then $\mathcal{C}$ is trivial.
\end{proposition}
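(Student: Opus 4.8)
The plan is to show that the hypothesis forces $\mathcal{C}$ to be a two-sided Serre tensor-ideal of $\mathcal{A}$, so that Proposition 4.11 immediately gives the conclusion.

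First I would set up the dictionary between membership in $\mathcal{C}$ and being a zero object of $\mathcal{A}/\mathcal{C}$. Applying Lemma 2.4(1) to the morphism $u=\mathrm{id}_Z$ shows that, for an object $Z$ of $\mathcal{A}$, the object $TZ$ is a zero object of $\mathcal{A}/\mathcal{C}$ if and only if $Z\in\mathcal{C}$; in particular $TB\cong T(0_{\mathcal{A}})=0$ for every $B\in\mathcal{C}$, where $0_{\mathcal{A}}$ denotes the zero object of $\mathcal{A}$. I would also record that $X\otimes 0_{\mathcal{A}}\cong 0_{\mathcal{A}}$ for every object $X$ of $\mathcal{A}$: since the tensor product of a tensor category is bilinear on morphisms, $\mathrm{id}_{X\otimes 0_{\mathcal{A}}}=\mathrm{id}_X\otimes\mathrm{id}_{0_{\mathcal{A}}}=\mathrm{id}_X\otimes 0=0$, so $X\otimes 0_{\mathcal{A}}$ has vanishing identity morphism and is therefore a zero object.

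Next, let $B\in\mathcal{C}$ and let $X$ be an arbitrary object of $\mathcal{A}$. Because $T$ is a monoidal functor there are natural isomorphisms $T(Z)\otimes T(W)\cong T(Z\otimes W)$ (the structure morphisms of a monoidal functor being isomorphisms, as in the convention of \cite{MR3242743}), so
\[
T(X\otimes B)\ \cong\ TX\otimes TB\ \cong\ TX\otimes T(0_{\mathcal{A}})\ \cong\ T(X\otimes 0_{\mathcal{A}})\ \cong\ T(0_{\mathcal{A}})\ =\ 0 .
\]
By the dictionary above, $T(X\otimes B)=0$ gives $X\otimes B\in\mathcal{C}$, and the symmetric computation gives $B\otimes X\in\mathcal{C}$. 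Hence $\mathcal{C}$ is a two-sided Serre tensor-ideal of $\mathcal{A}$, and Proposition 4.11 shows that $\mathcal{C}$ is trivial.

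I do not anticipate a genuine obstacle; the argument is essentially a bookkeeping reduction to Proposition 4.11. The two points requiring attention are the use of strong monoidality of $T$ — a merely lax structure morphism $TX\otimes TB\to T(X\otimes B)$ would not permit the conclusion $T(X\otimes B)=0$ — and the elementary fact that tensoring with a zero object yields a zero object, which relies on bilinearity of $\otimes$. If one wishes to bypass the appeal to Proposition 4.11, the reduction can be completed by hand exactly as there: once $B^*\otimes B\in\mathcal{C}$ is known, the evaluation $ev_B\colon B^*\otimes B\to 1$ is a nonzero epimorphism, because $1$ is simple and the snake identity $(\mathrm{id}_B\otimes ev_B)\circ a_{B,B^*,B}\circ(coev_B\otimes\mathrm{id}_B)=\mathrm{id}_B$ is nonzero for $B\neq 0$; thus $1$ is a quotient of an object of $\mathcal{C}$, so $1\in\mathcal{C}$ and $\mathcal{A}=\mathcal{C}$.
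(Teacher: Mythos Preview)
Your argument is correct. The approaches differ mainly in packaging: you first show that monoidality of $T$ forces $\mathcal{C}$ to be a two-sided Serre tensor-ideal and then invoke Proposition~4.11, whereas the paper argues directly by contradiction, observing that $T(B^*\otimes B)\cong TB^*\otimes TB=0$ while $T(ev_B)$ is an epimorphism onto $T1\neq 0$. Your reduction is a bit more robust in one respect: by routing the computation through $TX\otimes TB\cong TX\otimes T(0_{\mathcal{A}})\cong T(X\otimes 0_{\mathcal{A}})\cong 0$ you avoid ever assuming that the \emph{unknown} tensor on $\mathcal{A}/\mathcal{C}$ annihilates zero objects, relying only on functoriality of $\otimes_{\mathcal{A}/\mathcal{C}}$, strong monoidality of $T$, and bilinearity of $\otimes_{\mathcal{A}}$. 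The paper's direct route, on the other hand, makes the role of the unit visible (using that $T1$ is the unit of $\mathcal{A}/\mathcal{C}$, hence nonzero) and does not need to pass through the tensor-ideal notion; your closing paragraph essentially reproduces that argument as well.
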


\begin{proof}
We prove it by contradiction. Assume $\mathcal{C}$ is non-trivial.

Choose a non-zero object $B$ in $\mathcal{C}$, we know that $TB=0$ in $\mathcal{A}/\mathcal{C}$. This means $T(B^* \otimes B)\cong TB^* \otimes TB = 0$ in $\mathcal{A}/\mathcal{C}$. However, 
$$
ev_B:B^* \otimes B \to 1
$$
is a non-zero epimorphism in $\mathcal{A}$ because $1$ is a simple object in $\mathcal{A}$. Therefore, 
$$
T(ev_B): T(B^* \otimes B) \cong TB^* \otimes TB = 0 \to T1
$$
is an epimorphism by Lemma 2.4. Since $T$ is a monoidal functor, $T1\not= 0$ in $\mathcal{A}/\mathcal{C}$. However, this contradicts $T(ev_B)$ is an epimorphism in $\mathcal{A}/\mathcal{C}$.
\end{proof}

Now, we consider the representation category of $k\mathbb{Z}_2$.

\begin{example}
Consider $\mathbb{Z}_2 = \{1,g \}$ and a field $k$ such that $\mathrm{char}\ k \nmid 2$, and denote the representation category of $k\mathbb{Z}_2$ by $\mathcal{A}$. As is well known, $\mathcal{A}$ is semisimple and it has only two irreducible representations $W_1=k(1-g)$ and $W_2=k(1+g)$. Define a homomorphism by
$$
\begin{aligned}
\varphi: &k(1-g)\otimes k(1-g) &\to &k(1+g) \\
&a(1-g)\otimes (1-g) &\mapsto &a(1+g)
\end{aligned}
$$
where $a \in k$.

Let $\mathcal{C}$ be a Serre subcategory containing $W_1$. We now show that if $\mathcal{A}/\mathcal{C}$ is a tensor category, then $\mathcal{C}=\mathcal{A}$.  Since $W_1 \in \mathcal{C}$, we know that $TW_1=0$ in $\mathcal{A}/\mathcal{C}$. It follows that $TW_1\otimes TW_1\cong T(W_1\otimes W_1) =T(k(1-g)\otimes k(1-g)) = 0$ in $\mathcal{A}/\mathcal{C}$. This implies $W_1\otimes W_1 \in \mathcal{C}$. Consequently, $W_2=k(1+g) \in \mathcal{C}$ because $\varphi$ is a $k\mathbb{Z}_2$-isomorphism. Thus, both $W_1$ and $W_2$ are in $\mathcal{C}$, and $\mathcal{C}=\mathcal{A}$.

In order to make $\mathcal{A}/\mathcal{C}$ be a tensor category, it is clear that $W_2 = k(1+g)$ cannot be in $\mathcal{C}$, because $W_2 = k(1+g)$ is the unit object in $\mathcal{A}$. 
\end{example}

Next, we show that a two-sided Serre tensor-ideal of a multiring category is a direct sum of some component subcategories. First of all, recall that a multiring category $\mathcal{A}$ can be written as a direct sum of its component subcategories $\mathcal{A}=\bigoplus\limits_{i,j \in I} \mathcal{A}_{i,j}$, where $\mathcal{A}_{i,j}= 1_i \otimes \mathcal{A} \otimes 1_j$ and $I$ is an indexed set such that $1 = \bigoplus\limits_{i\in I} 1_i$.

\begin{lemma}
Let $\mathcal{A}$ be a multiring category with left duals, $\mathcal{C}$ be a two-sided Serre tensor-ideal of $\mathcal{A}$, then $\mathcal{C} \cap \mathcal{A}_{i,j}$ is either $\mathcal{A}_{i,j}$ or $0$, where $\mathcal{A}_{i,j}$ is a component subcategory of $\mathcal{A}$.
\end{lemma}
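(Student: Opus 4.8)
\emph{Proof strategy.} The plan is to run the argument from the proof of Proposition 4.12, but localized inside the block $\mathcal{A}_{i,j}$. Assume $\mathcal{C}\cap\mathcal{A}_{i,j}\neq 0$ and fix a nonzero object $B\in\mathcal{C}\cap\mathcal{A}_{i,j}$; the goal is to deduce $\mathcal{A}_{i,j}\subseteq\mathcal{C}$. As in Proposition 4.12, the whole point is to produce, from an object of $\mathcal{C}$, a nonzero epimorphism in $\mathcal{A}$ onto a simple summand of $1$, so that that summand lies in $\mathcal{C}$; the new ingredient is keeping track of \emph{which} summand $1_j$ of $1$ gets hit.

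First I would take a left dual $B^*$ of $B$ (available since $\mathcal{A}$ has left duals); since $\mathcal{C}$ is a two-sided Serre tensor-ideal and $B\in\mathcal{C}$, we get $B^*\otimes B\in\mathcal{C}$. Next I would locate $B^*\otimes B$ in the block decomposition $\mathcal{A}=\bigoplus_{k,l\in I}\mathcal{A}_{k,l}$: because $B\cong 1_i\otimes B\otimes 1_j$ we have $B^*\otimes B\cong (B^*\otimes B)\otimes 1_j$, so $B^*\otimes B$ lies in $\bigoplus_{k\in I}\mathcal{A}_{k,j}$. Since the decomposition is a direct sum of subcategories (no nonzero morphisms between distinct blocks, which ultimately comes from $1_k\otimes 1_l\cong\delta_{kl}1_k$) and $1_l\in\mathcal{A}_{l,l}$, every morphism $B^*\otimes B\to 1=\bigoplus_{l\in I}1_l$ factors through the summand $1_j$. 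Applying this to $ev_B\colon B^*\otimes B\to 1$, and using that the triangle identity $(id_B\otimes ev_B)\circ a_{B,B^*,B}\circ(coev_B\otimes id_B)=id_B$ forces $ev_B\neq 0$ (as $B\neq 0$), the image of $ev_B$ is a nonzero subobject of $1_j$. By Theorem 2.6(2) the object $1_j$ is simple, so $ev_B$ is an epimorphism onto $1_j$; since $\mathcal{C}$ is closed under quotients and $B^*\otimes B\in\mathcal{C}$, this gives $1_j\in\mathcal{C}$. Finally, for any $Z\in\mathcal{A}_{i,j}$ we have $Z\cong Z\otimes 1_j$, and $Z\otimes 1_j\in\mathcal{C}$ because $1_j\in\mathcal{C}$ and $\mathcal{C}$ is a two-sided tensor-ideal; hence $\mathcal{A}_{i,j}\subseteq\mathcal{C}$, i.e. $\mathcal{C}\cap\mathcal{A}_{i,j}=\mathcal{A}_{i,j}$.

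The only genuinely delicate point is the middle step: checking that $ev_B$ must factor through the single summand $1_j$ rather than through a larger subobject of $1$. This is exactly where the orthogonality of the component decomposition $\mathcal{A}=\bigoplus_{i,j}\mathcal{A}_{i,j}$ and the relations $1_i\otimes 1_j\cong\delta_{ij}1_i$ (both resting on Theorem 2.6 together with the semisimplicity of $1$) are needed. Everything else is a direct transcription of the proof of Proposition 4.12.
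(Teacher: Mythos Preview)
Your proof is correct and follows essentially the same route as the paper's: pick a nonzero $B\in\mathcal{C}\cap\mathcal{A}_{i,j}$, use $B^*\otimes B\in\mathcal{C}$, argue that $ev_B$ has image exactly $1_j$ (nonzero by the triangle identity, and contained in $1_j$ by the block decomposition), conclude $1_j\in\mathcal{C}$, and hence $\mathcal{A}_{i,j}\subseteq\mathcal{C}$. The only cosmetic difference is that the paper pins down $B^*\otimes B\in\mathcal{A}_{j,j}$ via the explicit formula $B^*\cong 1_j\otimes B^*\otimes 1_i$ and then rules out each summand $1_k$ ($k\neq j$) of $\mathrm{Im}(ev_B)$ by tensoring with $1_k$ on the left, whereas you invoke the orthogonality of the block decomposition directly; these are equivalent maneuvers.
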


\begin{proof}
Suppose $\mathcal{C} \cap \mathcal{A}_{i,j} \not= 0$. Choose $X \not= 0$ in $\mathcal{C} \cap \mathcal{A}_{i,j}$, we know that $X = 1_i \otimes X \otimes 1_j$. Because $X^*= (1_i \otimes X \otimes 1_j)^* = 1_j^* \otimes X^* \otimes 1_i^* =1_j \otimes X^* \otimes 1_i$, one can observe that
$$
(1_i \otimes X \otimes 1_j)^* \otimes 1_i \otimes X \otimes 1_j \in \mathcal{C} \cap \mathcal{A}_{j,j}.
$$

We claim that $\mathrm{Im} (ev_X) = 1_j$. Assume $1_k$ is a direct summand of $\mathrm{Im} (ev_X)$, where $k \not= j$. Consider the exact sequence
$$
\begin{tikzcd}
X^* \otimes X \arrow[r,"ev_X"] & \mathrm{Im} (ev_X) \arrow[r] & 0.
\end{tikzcd}
$$
Tensoring this sequence with $1_k$ on the left, we obtain an exact sequence
$$
\begin{tikzcd}
1_k \otimes X^* \otimes X \arrow[r,"id_{1_k} \otimes ev_X"] & 1_k \arrow[r] & 0.
\end{tikzcd}
$$
Since
$$
1_k \otimes X^* \otimes X = 1_k \otimes 1_j \otimes X^* \otimes 1_i \otimes 1_i \otimes X \otimes 1_j =0,
$$
the above exact sequence means $1_k = 0$, which is absurd. Therefore, $1_k$ is not a direct summand of $\mathrm{Im} (ev_X)$. Because $ev_X: X^* \otimes X \to 1$ is non-zero, we get that $\mathrm{Im} (ev_X) = 1_j$. Consequently, $1_j \in \mathcal{C}$. This implies $\mathcal{A}_{i,j} \subset \mathcal{C}$, and thus $\mathcal{C} \cap \mathcal{A}_{i,j} = \mathcal{A}_{i,j}$.
\end{proof}

In fact, if $\mathcal{A}_{i,j} \subset \mathcal{C}$, one can know $1_j\in \mathcal{C}$ from the above proof. As a result, $\mathcal{A}_{l,j} \subset \mathcal{C}$ and $\mathcal{A}_{j,l} \subset \mathcal{C}$ for all $l \in I$. Similar to the process of the above proof, if we consider
$$
coev_X:1 \to X \otimes X^* = 1_i \otimes X \otimes 1_j \otimes 1_j \otimes X^* \otimes 1_i,
$$
then we can obtain $1_i \in \mathcal{C}$. Consequently, $\mathcal{A}_{l,i} \subset \mathcal{C}$ and $\mathcal{A}_{i,l} \subset \mathcal{C}$ for all $l \in I$.

In particular, if $0 \not= \mathcal{A}_{i,j} \subset \mathcal{C}$, then $0 \not= \mathcal{A}_{i,i} \subset \mathcal{C}$, $0 \not= \mathcal{A}_{j,j} \subset \mathcal{C}$, and $0 \not= \mathcal{A}_{j,i} \subset \mathcal{C}$. Besides,
$$
\begin{aligned}
\mathcal{C} &= \mathcal{C} \cap \mathcal{A}\\
&= \mathcal{C} \cap \bigoplus_{i,j \in I} \mathcal{A}_{i,j}\\
&= \bigoplus_{i,j \in I} (\mathcal{C} \cap \mathcal{A}_{i,j}).
\end{aligned}
$$
It follows from the above Lemma that $\mathcal{C}$ is a direct sum of $\mathcal{A}_{i,j}$'s.

The following proposition provides a deeper understanding for two-sided Serre tensor-ideal.

\begin{proposition}
Let $\mathcal{A}$ be a multiring category with left duals, $\mathcal{C}$ be a two-sided Serre tensor-ideal of $\mathcal{A}$. Let $J=\{ i\in I| 1_i \in \mathcal{C} \}$, then $\mathcal{A}_{i,k}=0$, $\mathcal{A}_{k,i}=0$ for all $k \notin J$, $i \in J$.
\end{proposition}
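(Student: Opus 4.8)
The plan is to show directly that, under the stated hypotheses, the existence of a nonzero object in $\mathcal{A}_{i,k}$ or in $\mathcal{A}_{k,i}$ would force $1_k$ into $\mathcal{C}$, contradicting $k\notin J$. So the whole argument is a short reduction to the preceding Lemma.

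First I would fix $i\in J$ and $k\notin J$, so that $1_i\in\mathcal{C}$ while $1_k\notin\mathcal{C}$. Recall that every object $X$ of the component subcategory $\mathcal{A}_{i,k}=1_i\otimes\mathcal{A}\otimes 1_k$ satisfies $X\cong 1_i\otimes X\otimes 1_k$ (using $1_i\otimes 1_i\cong 1_i$). Reading this as $X\cong 1_i\otimes(X\otimes 1_k)$ and using that $\mathcal{C}$ is a two-sided Serre tensor-ideal together with $1_i\in\mathcal{C}$, I conclude $X\in\mathcal{C}$. Hence $\mathcal{A}_{i,k}\subseteq\mathcal{C}$, i.e. $\mathcal{C}\cap\mathcal{A}_{i,k}=\mathcal{A}_{i,k}$. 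Now I would invoke the preceding Lemma together with the remark following it: if $\mathcal{C}\cap\mathcal{A}_{i,k}\neq 0$, then $1_k\in\mathcal{C}$. Combining the two observations, if $\mathcal{A}_{i,k}\neq 0$ we would get $1_k\in\mathcal{C}$, that is $k\in J$, contradicting $k\notin J$. Therefore $\mathcal{A}_{i,k}=0$. The argument for $\mathcal{A}_{k,i}$ is entirely symmetric: any object $X$ of $\mathcal{A}_{k,i}$ satisfies $X\cong 1_k\otimes X\otimes 1_i=(1_k\otimes X)\otimes 1_i$, so $1_i\in\mathcal{C}$ gives $\mathcal{A}_{k,i}\subseteq\mathcal{C}$, and again the Lemma forces $1_k\in\mathcal{C}$ unless $\mathcal{A}_{k,i}=0$.

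There is no serious obstacle here; the only point that deserves care is the implication ``$\mathcal{C}\cap\mathcal{A}_{i,j}\neq 0\ \Rightarrow\ 1_i,1_j\in\mathcal{C}$'', which is precisely what the preceding Lemma and its follow-up remark supply (the left-duals hypothesis entering there through $ev_X$ and $coev_X$). It may therefore be cleanest to isolate and cite that implication explicitly rather than re-running the $ev$/$coev$ computation, so that the proof of this proposition reduces to the two-line absorption observations above.
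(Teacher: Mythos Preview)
Your proof is correct and follows essentially the same approach as the paper's: show $\mathcal{A}_{i,k}\subseteq\mathcal{C}$ via absorption by $1_i\in\mathcal{C}$, then invoke the content of the preceding Lemma (and its follow-up remark using $ev_X$ and $coev_X$) to conclude that a nonzero object in $\mathcal{C}\cap\mathcal{A}_{i,k}$ or $\mathcal{C}\cap\mathcal{A}_{k,i}$ would force $1_k\in\mathcal{C}$, a contradiction. The paper phrases the second step as ``a process similar to the proof of Lemma~4.14'' rather than citing the Lemma and remark explicitly, but the substance is identical; your suggestion to isolate the implication $\mathcal{C}\cap\mathcal{A}_{i,j}\neq 0\Rightarrow 1_i,1_j\in\mathcal{C}$ is exactly what the paper's remark after Lemma~4.14 does.
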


\begin{proof}
Assume $\mathcal{A}_{i,k} \not= 0$ for a given $i \in J$ and $k \notin J$. Since $1_i \in \mathcal{C}$, we know that $\mathcal{A}_{i,k} \subset \mathcal{C}$. For any $0 \not= X \in \mathcal{A}_{i,k}$, we can obtain $\mathrm{Im} (ev_X)=1_k \in \mathcal{C}$ from a process similar to the proof of Lemma 4.14. This contradicts $k \notin J$. Thus, $\mathcal{A}_{i,k} = 0$. Similarly, $\mathcal{A}_{k,i} = 0$. 
\end{proof}

Let $\mathcal{A}$ be a multiring category with left duals, the above proposition implies that a two-sided Serre tensor-ideal $\mathcal{C}$ of $\mathcal{A}$ can be written as $\mathcal{C} = \bigoplus\limits_{i,j\in J} \mathcal{A}_{i,j}$, where $J=\{ i\in I| 1_i \in \mathcal{C} \}$. Furthermore, for any $0 \not= X \in \mathcal{A}$, the above proposition indicates that
$$
\begin{aligned}
X &= \bigoplus_{i,j \in I} (1_i \otimes X \otimes 1_j)\\
&= \bigoplus_{i,j \notin J} (1_i \otimes X \otimes 1_j) \bigoplus \bigoplus_{i,j \in J} (1_i \otimes X \otimes 1_j).
\end{aligned}
$$

Let
$$
X' = \bigoplus_{i,j \notin J} (1_i \otimes X \otimes 1_j) \text{ and } X''=\bigoplus_{i,j \in J} (1_i \otimes X \otimes 1_j),
$$
it is clear that $X'' \in \mathcal{C}$ and $X' \cong X$ in $\mathcal{A}/\mathcal{C}$. Therefore, $\mathcal{A}/\mathcal{C} \cong \bigoplus\limits_{i,j\notin J} \mathcal{A}_{i,j}$.

Conversely, given a suitable subset $J$ of $I$, is $\bigoplus\limits_{i,j\in J} \mathcal{A}_{i,j}$ a two-sided Serre tensor-ideal? We show next that the answer is yes.

\begin{proposition}
Let $\mathcal{A}$ be a multiring category with left duals, $J \subset I$ satisfying that $\mathcal{A}_{i,k}=0$, $\mathcal{A}_{k,i}=0$ for all $k \notin J$, $i \in J$. Then $\mathcal{C}= \bigoplus\limits_{i,j\in J} \mathcal{A}_{i,j}$ is a two-sided Serre tensor-ideal of $\mathcal{A}$.
\end{proposition}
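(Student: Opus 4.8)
The plan is to verify the two requirements in the definition of a two-sided Serre tensor-ideal in turn: that $\mathcal{C}$ is a Serre subcategory of $\mathcal{A}$, and that $X\otimes Y$ and $Y\otimes X$ lie in $\mathcal{C}$ whenever $X\in\mathcal{A}$ and $Y\in\mathcal{C}$. Three facts are used repeatedly. First, the component decomposition $\mathcal{A}=\bigoplus_{i,j\in I}\mathcal{A}_{i,j}$ recalled above, under which every object decomposes canonically as $X\cong\bigoplus_{k,l\in I}1_k\otimes X\otimes 1_l$ and distinct components are mutually orthogonal. Second, since $\mathcal{A}$ is a multiring category the tensor product is biexact, so each functor $1_k\otimes-\otimes 1_l\colon\mathcal{A}\to\mathcal{A}$ is exact and additive. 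Third, the resulting description of $\mathcal{C}$: for $X\in\mathcal{A}_{i,j}$ one has $1_k\otimes X\otimes 1_l=(1_k\otimes 1_i)\otimes X\otimes(1_j\otimes 1_l)$, which is $0$ unless $(k,l)=(i,j)$, so combined with the canonical decomposition an object $X$ belongs to $\mathcal{C}$ if and only if $1_k\otimes X\otimes 1_l=0$ for every $(k,l)\notin J\times J$.

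For the Serre condition I would apply the criterion of Lemma 2.2: $\mathcal{C}$ is a non-empty full additive subcategory, so it is enough to show that whenever $X'\to X\to X''$ is exact at $X$ with $X',X''\in\mathcal{C}$, one has $X\in\mathcal{C}$. Fix $(k,l)\notin J\times J$ and apply the exact functor $1_k\otimes-\otimes 1_l$; the resulting three-term sequence is still exact at its middle term, its two outer terms vanish by the description of $\mathcal{C}$, and hence so does the middle term $1_k\otimes X\otimes 1_l$. Since $(k,l)$ was arbitrary, $X\in\mathcal{C}$. (Equivalently, the hypothesis on $J$ says $\mathcal{A}_{i,j}=0$ unless $i,j$ are both in $J$ or both outside it, so $\mathcal{A}=\mathcal{C}\oplus\bigoplus_{i,j\notin J}\mathcal{A}_{i,j}$ is a direct sum of abelian subcategories, from which the three Serre axioms also follow directly by applying the $1_k\otimes-\otimes 1_l$ to a short exact sequence.)

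For the tensor-ideal property, by additivity of $\otimes$ it suffices to treat $Y\in\mathcal{A}_{i,j}$ with $i,j\in J$, so that $Y\cong Y\otimes 1_j\cong 1_i\otimes Y$. Given any $X\in\mathcal{A}$, the unit decomposition $1=\bigoplus_{k\in I}1_k$ gives $X\otimes Y\cong\bigoplus_{k\in I}1_k\otimes X\otimes Y$, and each summand equals $1_k\otimes(X\otimes Y)\otimes 1_j\in\mathcal{A}_{k,j}$; those with $k\notin J$ vanish because $\mathcal{A}_{k,j}=0$ by the hypothesis on $J$ (as $j\in J$), so $X\otimes Y\in\bigoplus_{k\in J}\mathcal{A}_{k,j}\subset\mathcal{C}$. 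Symmetrically, $Y\otimes X\cong\bigoplus_{l\in I}1_i\otimes(Y\otimes X)\otimes 1_l$ with each summand in $\mathcal{A}_{i,l}$, and those with $l\notin J$ vanish because $\mathcal{A}_{i,l}=0$, so $Y\otimes X\in\mathcal{C}$. None of this is genuinely hard — it is bookkeeping with the orthogonal idempotents $1_i$ — and the only point needing care is the uniform use of the exactness and additivity of the functors $1_k\otimes-\otimes 1_l$ (valid because $\otimes$ is biexact in a multiring category), which is exactly what licenses passing freely between exact sequences, direct sums, and their images under tensoring with the $1_i$; I expect this to be the place where one must be most precise rather than a real obstacle.
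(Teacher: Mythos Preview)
Your proof is correct and follows essentially the same idea as the paper's: both arguments reduce membership in $\mathcal{C}$ to the vanishing of the components $1_k\otimes(-)\otimes 1_l$ for $(k,l)\notin J\times J$ and then exploit the exactness of these functors. The only cosmetic differences are that the paper verifies the three Serre axioms (subobjects, quotients, extensions) separately rather than invoking Lemma~2.2, and that the paper writes only ``It suffices to show $\mathcal{C}$ is a Serre subcategory'' without spelling out the tensor-ideal absorption, whereas you supply that verification explicitly; your treatment is therefore slightly more complete but not genuinely different in method.
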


\begin{proof}
It suffices to show $\mathcal{C}$ is a Serre subcategory. For any $X \in \mathcal{C}$, let $Y$ be a subobject of $X$. We know that 
$$
\begin{tikzcd}
0 \arrow[r] & Y \arrow[r] & X
\end{tikzcd}
$$
is exact. For $p \notin J$ or $q \notin J$, tensoring $1_p$ and $1_q$ on the left and right respectively gives that
$$
\begin{tikzcd}
0 \arrow[r] & 1_p \otimes Y \otimes 1_q \arrow[r] & 1_p \otimes X \otimes 1_q
\end{tikzcd}
$$
is exact. Because $1_p \otimes X \otimes 1_q$ is $0$, we obtain $1_p \otimes Y \otimes 1_q = 0$. This means $Y \in \mathcal{C}$. Hence, $\mathcal{C}$ is closed under taking subobjects. Similarly, $\mathcal{C}$ is closed under taking quotient objects. Now, suppose there is an exact sequence
$$
\begin{tikzcd}
0 \arrow[r] & X \arrow[r] & Y \arrow[r] & Z \arrow[r] & 0
\end{tikzcd}
$$
where $X,Z \in \mathcal{C}$. For $p \notin J$ or $q \notin J$, tensoring $1_p$ and $1_q$ on the left and right respectively gives that
$$
\begin{tikzcd}
0 \arrow[r] & 1_p \otimes X \otimes 1_q \arrow[r] & 1_p \otimes Y \otimes 1_q \arrow[r] & 1_p \otimes Z \otimes 1_q \arrow[r] & 0
\end{tikzcd}
$$
is exact. Because $1_p \otimes X \otimes 1_q = 0 = 1_p \otimes Z \otimes 1_q$, we obtain that $1_p \otimes Y \otimes 1_q = 0$. This means $Y \in \mathcal{C}$. Hence, $\mathcal{C}$ is closed under taking extensions. It follows that $\mathcal{C}$ is a Serre subcategory.
\end{proof}

In summary, on one hand for $J \subset I$ satisfying that $\mathcal{A}_{i,k}=0$, $\mathcal{A}_{k,i}=0$ for all $k \notin J$, $i \in J$, $\mathcal{C}= \bigoplus\limits_{i,j\in J} \mathcal{A}_{i,j}$ is a two-sided Serre tensor-ideal of $\mathcal{A}$. On the other hand, every two-sided Serre tensor-ideal of $\mathcal{A}$ can be written as $\mathcal{C} = \bigoplus\limits_{i,j\in J} \mathcal{A}_{i,j}$ for some $J \subset I$.

Consider the two-sided Serre tensor-ideal $\mathcal{C} = \bigoplus\limits_{i,j\in J} \mathcal{A}_{i,j}$. One can observe that the restriction of the canonical functor $T$ on $\bigoplus\limits_{i,j\notin J} \mathcal{A}_{i,j}$ is both an isomorphism and a monoidal functor. This implies the corresponding quotient category  $\mathcal{A}/\mathcal{C}$ is actually isomorphic to $\bigoplus\limits_{i,j\notin J} \mathcal{A}_{i,j}$ which is a subcategory of $\mathcal{A}$. Furthermore, it is easy to see that $T(1_{\mathcal{A}}) = T(\bigoplus_i 1_i) =  \bigoplus\limits_{i\notin J} 1_i$. Since $T$ is a monoidal functor, $1_{\mathcal{A}/\mathcal{C}} = T(1_{\mathcal{A}}) = \bigoplus\limits_{i\notin J} 1_i$. In fact, for $i,j\notin J$, $\mathcal{A}_{i,j}$ is a component subcategory of $\mathcal{A}/\mathcal{C}$.  

A direct corollary of the above result is that the image of another two-sided Serre tensor-ideal $\mathcal{C}'$ is a two-sided Serre tensor-ideal of the quotient category. Let $\mathcal{C} = \bigoplus\limits_{i,j\in J} \mathcal{A}_{i,j}$, $\mathcal{C}' = \bigoplus\limits_{i,j\in J'} \mathcal{A}_{i,j}$ be two two-sided Serre tensor-ideals of $\mathcal{A}$. Because $\mathcal{A}/\mathcal{C} \cong \bigoplus\limits_{i,j\notin J} \mathcal{A}_{i,j}$, $T(\mathcal{C}') \cong \bigoplus\limits_{i,j\in J' \backslash J} \mathcal{A}_{i,j}$ in $\mathcal{A}/\mathcal{C}$. We know that $\mathcal{A}_{i,k}=0$, $\mathcal{A}_{k,i}=0$ for all $k \notin J'$, $i \in J'$. Hence $\mathcal{A}_{i,k}=0$, $\mathcal{A}_{k,i}=0$ for all $k \notin J'\backslash J$, $i \in J'\backslash J$. This means the image of $\mathcal{C}'$ is a two-sided Serre tensor-ideal of $\mathcal{A}/\mathcal{C}$.

We end this section by discussing two-sided Serre tensor-ideals from a groupoid, one can refer the following example to section 4.13 in \cite{MR3242743}.

\begin{example}
Let $\mathcal{G}=(X,G,\mu,s,t,u,i)$ be a groupoid whose set of objects $X$ is finite and let $\mathcal{C}(\mathcal{G})$ be the category of finite dimensional vector spaces graded by the set $G$ of morphisms of $\mathcal{G}$ i.e. vector spaces of the form $V=\bigoplus\limits_{g\in G}V_g$. Introduce a tensor product on $\mathcal{C}(\mathcal{G})$ by the formula
$$
(V\otimes W)_g = \bigoplus_{(g_1,g_2):g_1g_2=g}V_{g_1}\otimes W_{g_2},
$$
where $V, W$ are objects in $\mathcal{C}(\mathcal{G})$.

We know that $\mathcal{C}(\mathcal{G})$ is a multitensor category. Suppose $A \in X$ is an object in $\mathcal{G}$ such that $\mathrm{Hom}_{\mathcal{G}}(A,B)$ is empty for all $B\not= A$. For convenience, we denote $G(A)=\mathrm{Hom}_{\mathcal{G}}(A,A)$. Now, we show that $\{ V=\bigoplus\limits_{g\in G(A)}V_g \}$ is a two-sided Serre tensor-ideal of $\mathcal{C}(\mathcal{G})$. For any object $W$ in $\mathcal{C}(\mathcal{G})$,
$$
((\bigoplus_{g\in G(A)}V_g) \otimes W)_k = \bigoplus_{(g_1,g_2):g_1g_2=k}(\bigoplus_{g\in G(A)}V_g)_{g_1}\otimes W_{g_2}.
$$
Note that $(\bigoplus\limits_{g\in G(A)}V_g)_{g_1}\otimes W_{g_2}$ is not zero only if $g_1 \in G(A)$. Consequently, it is not zero only if $k\in G(A)$. This means that $(\bigoplus\limits_{g\in G(A)}V_g) \otimes W \in \{ V=\bigoplus\limits_{g\in G(A)}V_g \}$. Similarly, $W \otimes (\bigoplus\limits_{g\in G(A)}V_g) \in \{ V=\bigoplus\limits_{g\in G(A)}V_g \}$. Thus, $\{ V=\bigoplus\limits_{g\in G(A)}V_g \}$ is a two-sided Serre tensor-ideal of $\mathcal{C}(\mathcal{G})$. 
\end{example}

In fact, $\{ V=\bigoplus\limits_{g\in G'}V_g \}$ is also a two-sided Serre tensor-ideal of $\mathcal{C}(\mathcal{G})$ if $G'$ is the morphism set of a connected component of $\mathcal{G}$. The above example is the case of a connected component consisting of one object.

\end{document}